\renewcommand{\Im}{\mathrm{Im}}
\numberwithin{equation}{section}
\newtheorem{theorem}{Theorem}[section]
\newtheorem{proposition}[theorem]{Proposition}
\newtheorem{lemma}[theorem]{Lemma}
\newtheorem{assumption}{Assumption}
\definecolor{darkblue}{rgb}{0,0,0.7}
\newcommand{\R}{{\mathbb R }}
\newcommand{\N}{{\mathbb N}}
\newcommand{\supp}{\mathop{\mathrm{supp}}}
\newcommand{\donothing}[1]{{}}
\let\OLDthebibliography\thebibliography
\renewcommand\thebibliography[1]{
\OLDthebibliography{#1}
\setlength{\parskip}{1pt}
\setlength{\itemsep}{1pt plus 0.3ex}
}
\newcommand{\M}{\mathcal{M}}
\def \p {\partial}
\title[Partial Data inverse problem for semi-linear wave equation]{On a partial data inverse problem for the semi-linear wave equation}
\author[Liu]{Boya Liu}
\address{Boya Liu, Department of Mathematics\\
North Dakota State University, Fargo\\ 
ND 58102, USA}
\email{boya.liu@ndsu.edu}
\author[Wang]{Weinan Wang}
\address{Weinan Wang, Department of Mathematics\\
University of Oklahoma, Norman\\ 
OK 73019, USA}
\email{ww@ou.edu}
\begin{document}

\begin{abstract}
We show that a partial Dirichlet-to-Neumann map, where the measurement set is arbitrarily small, uniquely determines the time-dependent nonlinearity of order three or higher in a semi-linear wave equation up to natural obstructions on a Lorentzian manifold with boundary. In particular, we do not  impose any geometric or size restrictions on the measurement set. The proof relies on the technique of higher order linearization combined with  the construction of Gaussian beams with reflections on the boundary.
\end{abstract}

\maketitle

\section{Introduction and Statement of Results}

In many physical applications, measurements can only be made on certain portions of the boundary, making partial data inverse problems of great practical significance. In this paper, we show that the time-dependent nonlinearity of order at least three in a semi-linear wave equation can be recovered uniquely in an optimal subset  of a globally hyperbolic Lorentzian manifold with boundary from a partial Dirichlet-to-Neumann map, where measurements are performed on an arbitrarily small subset of the boundary.  The assumption of global hyperbolicity
% is consistent with several earlier works, such as \cite{Feizmohammadi_Oksanen_semilinear_Euclidean,Hintz_Uhlmann_Zhai_4th_order,Hintz_Uhlmann_Zhai,Kurylev_Lassas_Uhlmann}, and 
is crucial to guarantee the well-posedness of the forward problem with small   Dirichlet data. 

The proof of our result relies on two main components, the first of which is  the technique of higher order linearization. Since the seminal work \cite{Kurylev_Lassas_Uhlmann}, nonlinearity has been exploited in the study of inverse problems, and the higher order linearization plays a crucial role in it. The other significant ingredient in our proof is the construction of the Gaussian beam quasimodes. Due to the partial data setting, the integral identity we obtain after carrying out the higher order linearization contains a term over the inaccessible portion of the boundary. In order to show that this term vanishes in a suitable limit, we construct solutions that are small   in a neighborhood of the boundary. This is achieved via the construction of Gaussian beams with reflections at the boundary. 
We first utilize the third order linearization and Gaussian beams to recover the cubic nonlinearity, and  the higher order ones are obtained inductively.

The main contribution of this paper lies in the establishment of a partial data uniqueness result for time-dependent coefficients of a semi-linear wave equation on Lorentzian manifolds, particularly without imposing restrictive assumptions on the measurement set or the geometry. The corresponding problem for linear hyperbolic equations still remains open.

\subsection{Problem setting and the main result}

Let $n\ge 2$, and let $(\mathcal{M},g)$ be a $(1+n)$-dimensional Lorentzian manifold with smooth boundary $\p \M$, where the metric $g$ is of signature $(-,+,+,\cdots, +)$. Throughout this paper, we shall assume  that   $(\M,g)$ is \textit{globally hyperbolic}. It means that $(\M, g)$ is isometric to the product manifold $\R \times M$ with the metric
\[
g=-\beta(t, x)dt^2+g_0(t, x), \quad \text{ for all } (t,x)\in \R \times M.
\]
Here $M$ is a $n$-dimensional   manifold with boundary $\p M$, $\beta$ is a positive smooth  function, and $g_0$ is a Riemannian  metric on $M$ that  depends smoothly on the $t$-variable, see \cite{black1}.  From a physical perspective, the product structure $\R \times M$ with a time-dependent metric $g_0(t,x')$ encompasses a wide class of physically relevant space-times, including cosmological models and stationary black hole exteriors \cite{black}.   
Furthermore, for all $t\in \R$, the set $\{t\}\times N$ is a Cauchy hypersurface in $\M$, i.e., any causal curve intersects it at most once \cite{Feizmohammadi_Oksanen_semilinear_Euclidean}. 

When $(0,T)\times M$ is a globally hyperbolic manifold,  its lateral boundary $\Sigma := (0,T)\times \p  M$ is   time-like. In this paper we assume that $\Sigma$ is null-convex, meaning that the second fundamental form II$(V,V)=g(\nabla_V\nu, V)\ge 0$ for all null vectors $V\in T(\Sigma)$. We refer readers to \cite{Hintz_Uhlmann} for a discussion of this condition.

Consider a time-dependent function $V: (0,T)\times M \times \mathbb{C} \rightarrow \mathbb{C}$ that satisfies the following assumptions:
\begin{assumption}
\label{assump1}
The map $\mathbb{C} \ni z \mapsto V(\cdot, z)$ is holomorphic with values in the H\"older space $C^{\alpha}((0,T)\times M)$ for some $0<\alpha<1$.
\end{assumption}

\begin{assumption}
\label{assump2}
$V(t,x, 0)=\partial_{z} V(t,x, 0)=0$ for all $(t,x) \in (0,T)\times M$.
\end{assumption}
\noindent Then it follows from Assumptions \ref{assump1} and \ref{assump2} that $V$ can be expressed as a power series
\begin{equation}
\label{eq:expansion_V}
V(t,x, z)=\sum_{k=2}^{\infty} V_{k}(t,x) \frac{z^{k}}{k!}, \quad V_{k}(t,x):=\partial_{z}^{k} V(t,x, 0) \in C^\infty((0,T)\times M),
\end{equation}
which converges in the $C^{\alpha}$--topology. Additionally,  we assume that $V_2(t,x)=0$ in $(0,T)\times M$ throughout this paper.

Consider the initial boundary value problem for semi-linear wave equation
\begin{equation}
\label{eq:ibvp_semilinear_wave}
\begin{cases}
\Box_g u  + V(t,x,u)=0 & \text{ in }  (0,T)\times M,
\\
u=f & \text{ on } \Sigma,
\\
u(0,\cdot)=0, \quad \p_t u(0,\cdot)=0 & \text{ in } M.
\end{cases}
\end{equation}
Here $\Box_g =\p_t^2 - \Delta_g$ is the wave operator on $(0,T)\times M$. In local coordinates, it acts on $C^2$-smooth functions according to the   expression
\[
\Box_g=|g|^{-1/2}\p_{j}\left(g^{jk}|g|^{1/2}\p_{k}v\right).
\]
Here $|g|$ and $g^{jk}$ denote the absolute value of the determinant and the inverse of $g_{jk}$, respectively. Global hyperbolicity of the Lorentzian manifold provides a crucial geometric framework to establish the well-posedness of \eqref{eq:ibvp_semilinear_wave} with a  small Dirichlet value $f$. When a small Neumann boundary condition is considered, the work \cite{Hintz_Uhlmann_Zhai} established the well-posedness of \eqref{eq:ibvp_semilinear_wave}, see also \cite{Lassas_Liimatainen_Potenciano_Tyni} and the references therein for additional discussions  on the well-posedness of nonlinear hyperbolic equations. 

We next introduce the boundary measurement considered in this paper. Let $\Gamma$ be an arbitrary nonempty open proper subset of $\Sigma$. Associated with the problem \eqref{eq:ibvp_semilinear_wave}, we define the partial Dirichlet-to-Neumann map $\Lambda^\Gamma_V$ by the formula
\begin{equation}
\label{eq:def_DN_map}
\Lambda^\Gamma_V f = \p_\nu u|_{\Gamma}. 
\quad 
\supp f \subseteq \Gamma.
\end{equation}
Here $f\in C^{m+1}(\Sigma)$ is small for some  large fixed number $m$, the function $u$ is the unique solution to the problem \eqref{eq:ibvp_semilinear_wave}, and $\nu$ is the unit outer normal to the boundary. 
% Nonlinear hyperbolic equations appear in many areas of physics and engineering. For example, the propagation of high intensity ultrasound waves are modeled by nonlinear wave equations \cite{Humphrey}.

Let us now recall some notations and definitions related to Lorentzian manifolds by following \cite{Beem_Ehrlich_Easley,ONeill}. Let $\M$ be a Lorentzian manifold. A smooth curve $\mu: (a,b)\to \M$ is said to be \textit{time-like} if $g(\dot{\mu}(s), \dot{\mu}(s)) < 0$ for all $s\in (a,b)$, and is called \textit{causal} if $g(\dot{\mu}(s), \dot{\mu}(s))\le 0$ and $\dot{\mu}(s)\ne 0$ for all $s\in (a,b)$. For any points $p,q\in \M$, we write $p\ll q$ if $p\ne q$ and there exists a future-pointing time-like path from $p$ to $q$. Similarly, we denote $p<q$ if $p\ne q$, and they can be joined by a future-pointing causal path. Furthermore, we write $p\le q$ if $p=q$ or $p<q$. Then the \textit{chronological future} of $p\in \M$ is the set $I^+(p)=\{q\in \M: p\ll q\}$, and the \textit{causal future} of $p$ is given by the set $J^+(p)=\{q\in \M: p\le q\}$. The \textit{chronological past} $I^-(q)$ and \textit{causal past} $J^{-}(q)$ of $q\in \M$ are defined in similar manners. 

For any set $A\subset \M$, we define $J^\pm (A) = \cup_{p\in A} J^\pm (p)$, which is always open. On the other hand, if $(\M, g)$ is globally hyperbolic, by \cite[Lemmas 14.6 and 14.22]{ONeill}, the sets $J^\pm (p)$ are closed, and the sets $I^\pm (p)$ and $J^\pm (p)$ satisfy the relation $\mathrm{cl}(I^\pm(p)) = J^{\pm}(p)$. Here $\mathrm{cl}(A)$ means the closure of the set $A$. 

Due to the finite speed of wave propagation and the causal structure of the Lorentzian manifold, it is only possible to recover $V(t,x)$ in the set 
\[
\mathbb{U} =  \bigcup_{p,q \in \Sigma} I^-(q) \cap I^+(p).
\]
This is the domain that can be reached by sending the waves from $\Sigma$ so that the possible signals generated by a nonlinear interaction of waves can also be detected on $\Sigma$. We refer readers to see \cite[Figure 1]{Lassas_Uhlmann_Wang} for a visualization of   $\mathbb{U}$. 
In this paper we assume that  null-geodesics do not have conjugate points in $\mathbb{U}$ or on $\Sigma$.

Our main result of this paper is as follows.

\begin{theorem}
\label{thm:main_result}
Let $(\M,g)$ be a  $(1+n)$-dimensional globally hyperbolic Lorentzian manifold with $n\ge 2$, and let $\Sigma = (0,T)\times \p M$ be its lateral boundary. Let $\Gamma \subset \Sigma$ be an arbitrary non-empty open set. Assume that null-geodesics do not have conjugate points in $\mathbb{U}$ or on $\Sigma$. Let $V^{(1)}, V^{(2)}: (0,T)\times M \times \mathbb{C} \rightarrow \mathbb{C}$ satisfy  Assumptions \ref{assump1} and \ref{assump2}. Furthermore, assume that $V_2^{(j)}(t,x)=0$, $j=1,2$, for all $(t,x)\in (0,T)\times M$ in the expansion \eqref{eq:expansion_V}.
%Suppose that the function $f \in B_{r_{0}}^{\alpha}(\partial \M)$ is such that $\supp (f) \subset   \Gamma$. 
Then $\Lambda_{V^{(1)}}^{\Gamma} =\Lambda_{V^{(2)}}^{\Gamma} $ implies that $V^{(1)}=V^{(2)}$ in $\mathbb{U}$.
\end{theorem}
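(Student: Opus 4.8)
The plan is to combine the higher order linearization scheme with a Gaussian beam construction that dies out near the inaccessible part of the boundary, following the strategy outlined in the introduction. First I would set up the $N$-fold linearization of the forward problem \eqref{eq:ibvp_semilinear_wave}: take Dirichlet data $f = \sum_{\ell=1}^{N} \e_\ell f_\ell$ with $\supp f_\ell \subseteq \Gamma$, write $u = u(x;\e_1,\dots,\e_N)$ for the solution, and differentiate in $\partial_{\e_1}\cdots\partial_{\e_N}|_{\e=0}$. Since $V_2 = 0$, the first nontrivial interaction occurs at order $N = 3$: the first linearizations $v_\ell := \partial_{\e_\ell} u|_{\e=0}$ solve the linear wave equation $\Box_g v_\ell = 0$ with $v_\ell|_\Sigma = f_\ell$, $v_\ell|_{t=0} = \partial_t v_\ell|_{t=0} = 0$, and the third order linearization $w := \partial_{\e_1}\partial_{\e_2}\partial_{\e_3} u|_{\e=0}$ satisfies $\Box_g w = -V_3\, v_1 v_2 v_3$ with zero Cauchy and lateral data. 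Testing this against a fourth solution $v_0$ of the adjoint (here self-adjoint up to lower order, so again $\Box_g v_0 = 0$) with $v_0|_\Sigma = f_0$ supported in $\Gamma$, and integrating by parts, yields the integral identity
\begin{equation}
\label{eq:plan_id}
\int_{(0,T)\times M} V_3\, v_0 v_1 v_2 v_3 \, dV_g = \int_{\Sigma} \bigl(\Lambda_{V^{(1)}}^\Gamma - \Lambda_{V^{(2)}}^\Gamma\bigr)(\cdots)\, v_0 \, dS_g - \int_{\Sigma \setminus \Gamma} \partial_\nu w\, v_0\, dS_g,
\end{equation}
in which the first term on the right vanishes by hypothesis, leaving the difference of the $V_3^{(j)}$ tested against a product of four linear waves, modulo the boundary term over the inaccessible set $\Sigma\setminus\Gamma$.

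The crux is to choose the four linear solutions $v_0,\dots,v_3$ to be Gaussian beams concentrating along null-geodesics that, when continued, all pass through a given point $p\in\mathbb U$ with the momentum directions of three of them adding up to a null covector (the standard four-wave resonance condition), so that in the limit the left side recovers the Fourier transform of $V_3^{(1)} - V_3^{(2)}$ along a light ray. Because we only get to prescribe data on the small set $\Gamma$, I would build these Gaussian beams so that they \emph{reflect} off the lateral boundary $\Sigma$ — launching the beam from a point of $\Gamma$, letting it bounce (the null-convexity of $\Sigma$ and the no-conjugate-points assumption guarantee the reflected beam stays a controlled quasimode) so as to reach $p$ with the desired direction, and crucially designing the amplitude so the beam is $O(\lambda^{-K})$-small, for any $K$, in a fixed neighborhood of $\Sigma\setminus\Gamma$. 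This is what forces the boundary term $\int_{\Sigma\setminus\Gamma}\partial_\nu w\, v_0\, dS_g$ in \eqref{eq:plan_id} to vanish as the concentration parameter $\lambda\to\infty$: $w$ is built from $v_1,v_2,v_3$, each small near $\Sigma\setminus\Gamma$, and $v_0$ is small there too, so the product and its normal derivative decay faster than any polynomial while the remaining integrand grows at most polynomially. The quasimode error (the fact that Gaussian beams solve $\Box_g v = O(\lambda^{-\infty})$ rather than exactly) is absorbed by the standard well-posedness and energy estimates for \eqref{eq:ibvp_semilinear_wave}, correcting each beam to an exact solution at the cost of an $H^1$-small remainder.

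With the boundary term dispatched, the identity reduces to $\int V_3\, v_0v_1v_2v_3\, dV_g \to 0$ for all admissible four-tuples of reflected null-geodesics through points of $\mathbb U$; a stationary-phase analysis of the concentrating integral then shows this forces the light-ray transform of $V_3^{(1)} - V_3^{(2)}$ to vanish over a sufficiently rich family of (broken) null-geodesics, and injectivity of this transform on $\mathbb U$ — available under the no-conjugate-points hypothesis — gives $V_3^{(1)} = V_3^{(2)}$ in $\mathbb U$. Finally I would run the induction on the order $k\ge 3$: assuming $V_j^{(1)} = V_j^{(2)}$ in $\mathbb U$ for all $3\le j < k$, the $k$-th order linearization produces an analogous integral identity in which all the lower order interaction terms cancel by the induction hypothesis, leaving $\int (V_k^{(1)} - V_k^{(2)})\, v_0 v_1\cdots v_k\, dV_g = 0$ up to a boundary term handled exactly as before, and the same light-ray-transform argument yields $V_k^{(1)} = V_k^{(2)}$. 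Holomorphicity (Assumption \ref{assump1}) then upgrades equality of all Taylor coefficients to $V^{(1)} = V^{(2)}$ in $\mathbb U$. I expect the main obstacle to be the Gaussian beam construction with boundary reflections: controlling the transport and eikonal equations through reflection points, ensuring the beam remains a valid quasimode globally on $(0,T)\times M$, and simultaneously arranging both the smallness near $\Sigma\setminus\Gamma$ and the correct resonant configuration at the interaction point $p$.
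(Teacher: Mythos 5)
Your overall architecture coincides with the paper's: third order linearization of \eqref{eq:ibvp_semilinear_wave} with data supported in $\Gamma$, an integral identity whose only surviving right-hand term is an integral over $\Sigma\setminus\Gamma$, Gaussian beams with boundary reflections constructed to be small on the inaccessible part, stationary phase at a four-wave interaction point, and an induction for the coefficients of order $m\ge 4$. However, two steps as you state them do not hold up. First, your argument for killing the boundary term is flawed: you claim $\partial_\nu w$ is small near $\Sigma\setminus\Gamma$ because $v_1,v_2,v_3$ are small there. That inference fails, since $w=\mathcal{U}^{(123)}$ solves $\Box_g w=-V_3\,v_1v_2v_3$ with a source supported in the interior (along the geodesics), and its Neumann trace on $\Sigma\setminus\Gamma$ receives contributions propagated from the interior; smallness of the $v_j$ near that boundary portion controls nothing about $\partial_\nu w$ there. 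What actually suffices (and is what the paper does) is weaker: $\partial_\nu\mathcal{U}^{(j,123)}|_\Sigma$ is merely uniformly bounded in $L^2(\Sigma)$ in the large parameter (because $\|w_j\|_{C^0}=\mathcal{O}(1)$ and the linear well-posedness estimates are uniform), while only the test solution $w_0$ needs to be small on $\Sigma\setminus\Gamma$, via the reflected-beam estimate \eqref{eq:est_quasimode_inaccessible}; Cauchy--Schwarz then gives the decay \eqref{eq:RHS_int}.

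Second, your recovery step misidentifies what the stationary-phase limit produces and then leans on an unproved ingredient. With four beams whose broken null geodesics meet only at $\tilde p\in\mathbb{U}$ and whose covectors satisfy the resonance $\kappa_0\theta_0+\kappa_1\theta_1+\kappa_2\theta_2+\kappa_3\theta_3=0$, the combined phase $S=\sum_j\kappa_j\varphi^{(j)}$ satisfies $S(\tilde p)=0$, $dS(\tilde p)=0$ and $\Im S\gtrsim d(\cdot,\tilde p)^2$ (Lemma \ref{l.w10291}), so after multiplying by $\rho^{\frac{n+1}{2}}$ stationary phase yields $c\,V_3(\tilde p)$ times nonvanishing amplitudes, i.e.\ \emph{pointwise} recovery $V_3(\tilde p)=0$ --- not a Fourier transform along a light ray. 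Your appeal to ``injectivity of the light-ray transform on $\mathbb{U}$'' is therefore both unnecessary and unsupported as stated: to even produce a ray transform you would need a different beam configuration (e.g.\ two beams on the same geodesic), and injectivity of that transform in this time-dependent, partial-data Lorentzian generality is not something you can simply cite; the paper never uses it. With these two repairs --- bounding the Neumann traces uniformly and using only the smallness of $w_0$ on $\Sigma\setminus\Gamma$, and reading off $V_m(\tilde p)$ pointwise from the resonant stationary-phase asymptotics (with the beams $w_3,\dots,w_m$ taken along one geodesic at frequency $\kappa_3/(m-2)$ in the induction step) --- your outline matches the paper's proof.
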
 

As noted in \cite{Kurylev_Lassas_Uhlmann},  nonlinearity makes it possible to solve inverse problems for certain nonlinear equations whose linear counterparts remain open. 
The  result corresponding to Theorem \ref{thm:main_result} for the linear wave equation with time-dependent coefficients is not available. To our understanding, the best partial data results in this direction are established in \cite{Kian_partial_data} in the Euclidean setting and in \cite{Kian_Oksanen,Liu_Saksala_Yan_potential} on Riemannian manifolds having a certain product structure. In these results, the Dirichlet and Neumann data  are both measured on approximately half of the lateral boundary of the space-time. 

Finally, let us remark that  the unique  recovery of the Lorentzian metric and the quadratic nonlinearity (up to a natural gauge) from the partial Dirichlet-to-Neumann map \eqref{eq:def_DN_map} requires a different approach from the one developed in this paper. We shall leave these questions to future works, see \cite{Hintz_Uhlmann_Zhai_4th_order,Hintz_Uhlmann_Zhai} and the references therein for several full data results in this direction, as well as  a discussion of different methods to address the full data problem. 

\subsection{Previous literature}
Inverse problems for hyperbolic equations have attracted significant attention in recent years. In this paper, we restrict our discussions only to the known results related to nonlinear hyperbolic equations. For literature concerning  linear hyperbolic equations, we refer readers to   \cite{Liu_Saksala_Yan,Liu_Saksala_Yan_potential} for a review of uniqueness results, as well as  \cite{liu2025h} for a survey of works addressing  the issue of  stability.

The pioneering work   \cite{Kurylev_Lassas_Uhlmann} established fundamental results for semi-linear wave equations on Lorentzian manifolds. It also shows that nonlinearity is a beneficial tool to study inverse problems for nonlinear  equations.  There are many subsequent works devoting to the study of  inverse problems for nonlinear hyperbolic equations. Inverse boundary value problems of recovering the metric or the nonlinearity are studied for semi-linear or quasi-linear wave equations in \cite{Acosta_Uhlmann_Zhai,Hoop_Uhlmann_Wang,Hintz_Uhlmann,Hintz_Uhlmann_Zhai_4th_order,Hintz_Uhlmann_Zhai,Li_Zhang,Uhlmann_Zhai,Uhlmann_Zhang_Quadratic,Uhlmann_Zhang_acoustics,Zhang_damping} and the references therein. We also refer readers to \cite{Uhlmann_Zhai_survey} for a survey of some recent progress on inverse problems for nonlinear hyperbolic equations.

Another important type of data appearing in the study of inverse problems for hyperbolic equations is the \textit{source-to-solution} map, which is commonly used in the setting of manifolds without boundary. For linear equations, this problem has been investigated on closed or complete Riemannian manifolds in \cite{source-to-solution,Lassas_et_al_disjoint,Lassas_Nul_Oksanen_Ylinen,saksala2025inverse}, among others. In the nonlinear setting, the works \cite{Kurylev_Lassas_Uhlmann,Lassas_Uhlmann_Wang} recovered a Lorentzian metric and the nonlinearity  (up to their natural gauge)  from the source-to-solution map associated with a semi-linear wave equation. Subsequently, the authors of \cite{Feizmohammadi_Oksanen_semilinear_Euclidean} established the unique recovery of a time-dependent linear zeroth order perturbation appearing in a semi-linear wave equation with cubic nonlinearity. Moreover, several  uniqueness results showing that the source-to-solution map associated with various semi-linear and quasi-linear wave equations  determines the metric up to a natural gauge were established in \cite{Feizmohammadi_Lassas_Oksanen_stos}, notably with  sources and the receivers supported on disjoint subsets of a Lorentzian manifold. 

\subsection{Main ideas of the proof}

%\WW{Main ingredients: well-posedness; higher-order linearization; Gaussian beam; and integral identities?}

Compared with the linear case, inverse problems for  nonlinear equations present different challenges.
Although   nonlinearity is beneficial to the study of inverse problems,    it   introduces significant technical difficulties in establishing the well-posedness of the forward problem. We shall establish the well-posedness for the forward problem \eqref{eq:ibvp_semilinear_wave} in Section \ref{sec:wellposedness} with a sufficiently smooth small Dirichlet value. 
The main idea in the proof is to transform the   problem \eqref{eq:ibvp_semilinear_wave} into a similar problem with zero Dirichlet boundary value. Then we show the existence and uniqueness of solutions via a contraction mapping argument. The global hyperbolicity assumption of the Lorentzian manifold allows us to foliate the space-time into Cauchy hypersurfaces and provides a natural time function \cite{black1}, which is essential for our analysis of the forward problem.

There are two main ingredients in the proof of Theorem \ref{thm:main_result}, the first being the technique of higher order linearization. A main reason that nonlinearity is beneficial to solve inverse problems for nonlinear wave equations is that the interaction of several nonlinear waves   produces new waves, which give information that is not possible to obtain for linear equations. The technique of higher order linearization plays a significant role in this treatment. We refer readers to \cite[Section 1]{Uhlmann_Zhai_survey} for a brief overview of this technique. Since its introduction in  \cite{Kurylev_Lassas_Uhlmann}, this technique has been widely applied in the study of inverse problems for various types of nonlinear equations. Among the vast amount of related literature, we refer readers to  \cite{Carstea_Nakamura_Vashisth,Feizmohammadi_Oksanen_elliptic,Krupchyk_Uhlmann_gradient,Krupchyk_Uhlmann_semilinear_partial,Krupchyk_Uhlmann_CTA,Lassas_Liimatainen_Lin_Salo} for applications to nonlinear elliptic equations,   \cite{Feizmohammadi_et_all_2019,Hintz_Uhlmann_Zhai,Lassas_Liimatainen_Potenciano_Tyni,Lassas_Uhlmann_Wang,Wang_Zhou} to nonlinear hyperbolic equations, as well as \cite{Kian_Uhlmann,Lai_Lu_Zhou,Lai_Uhlmann_Yan} to nonlinear parabolic equations.

Assumption \ref{assump1} of the nonlinear term $V(t,x,u)$ is essential for the higher-order linearization approach, as it allows us to differentiate the nonlinear terms arbitrarily many times. On the other hand, Assumption \ref{assump2} ensures that homogeneous Dirichlet boundary data leads to zero solutions, which simplifies the linearization procedure.  

The construction of Gaussian beam solutions to the linear wave equation, which is accomplished in Section \ref{sec:Gaussian_beam}, is another major component in our proof. In particular, we use a reflection argument to construct the Gaussian beams, which are small near the boundary. The reason of such construction is as follows. Due to the partial data setting, the integral identity \eqref{eq:integral_identity}, which we obtain after performing the higher order linearization, contains a term over the inaccessible portion of the boundary. The key difficulty of the inverse problem considered in this paper lies in this part. In order to overcome it, we shall construct Gaussian beams satisfying desired boundary behaviors, so that the boundary term vanishes in a certain limit.  The construction of Gaussian beams with reflection at the boundary was first developed in \cite{Kenig_Salo} in the setting of Riemannian manifolds and  \cite{Hintz_Uhlmann_Zhai_4th_order} for Lorentzian manifolds. This paper  further develops these techniques with the geometric framework of global hyperbolicity. 

To complete the proof of Theorem \ref{thm:main_result}, by  utilizing the third order linearization and the Gaussian beam solutions, we first show in Subsection \ref{subsec:recovery_V3}  that the cubic nonlinearity $V_3$ in \eqref{eq:expansion_V} can be uniquely recovered  in $\mathbb{U}$ from the partial Dirichlet-to-Neumann map $\Lambda_V^\Gamma$. Then in Subsection \ref{subsec:recovery_higher_order} we establish the uniqueness of $V_m$, $m\ge 4$, by means of higher order linearization and an induction argument. 

%Our approach builds upon the higher-order linearization technique introduced in \cite{Feizmohammadi_Oksanen_semilinear_Euclidean} for Euclidean domains and extends it to the Lorentzian setting with partial data. The use of Gaussian beams in inverse problems dates back to the seminal work of Babich and Uspenskii, with modern applications to wave equations developed in \cite{KKL_book}.

This paper is organized as follows. Section \ref{sec:wellposedness} establishes the well-posedness of the initial boundary value problem \eqref{eq:ibvp_semilinear_wave} with small Dirichlet boundary data. Then  we construct Gaussian beam solutions to the linear wave equation in Section \ref{sec:Gaussian_beam}. Finally, we utilize these  solutions and perform higher order linearization to prove Theorem \ref{thm:main_result} in Section \ref{sec:proof_main_result}. 

\section{Well-posedness of the Forward Problem}
\label{sec:wellposedness}

In this section, we prove the well-posedness of the forward problem \eqref{eq:ibvp_semilinear_wave} with small Dirichlet boundary data. For the case of small Neumann boundary data, the corresponding well-posedness was established in \cite{Hintz_Uhlmann_Zhai}. 

Our main result in this section is as follows.

\begin{theorem}
\label{thm:wellposedness}
Let $s \ge 5$, and let $\varepsilon_0>0$ be a small number. For any function $f\in C^{s+1}(\Sigma)$ such that $\|f\|_{C^{s+1}(\Sigma)} \leq \varepsilon_0$ and satisfies necessary compatibility conditions $\frac{\partial^{\ell} f}{\partial t^{\ell}}(0,\cdot)=0$ for $\ell=1,2,3,...,s$, the initial boundary value problem \eqref{eq:ibvp_semilinear_wave} admits a unique solution   $u \in \bigcap_{k=0}^s C^k\left([0, T] ; H^{s-k}(M)\right)$. 
% Furthermore, we have
% \begin{equation}
% \label{eq:est_detivatives_solution}
% \sup _{t \in[0, T]}\left\|\partial_t^{m-k} u(t,\cdot)\right\|_{H^{m-k}(M)} \leq C\|f\|_{C^{m+1}([0, T] \times \p M)},
% \end{equation}
% for some constant $C>0$, which is  independent of $f$.
\end{theorem}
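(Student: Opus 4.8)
The plan is to reduce the inhomogeneous-boundary problem \eqref{eq:ibvp_semilinear_wave} to a problem with zero Dirichlet data, and then to solve the resulting semi-linear problem by a fixed-point argument in the energy space $X_s := \bigcap_{k=0}^s C^k([0,T];H^{s-k}(M))$. First I would pick a smooth extension $F$ of the boundary datum $f$ into $(0,T)\times M$, depending linearly and boundedly on $f$, with $F|_\Sigma = f$, $\|F\|_{X_{s+1}} \lesssim \|f\|_{C^{s+1}(\Sigma)}$, and $\partial_t^\ell F(0,\cdot)=0$ for $\ell=0,1,\dots,s$ (this uses exactly the compatibility hypotheses on $f$, so that $F$ and all the data below lie in the right spaces). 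Writing $u = F + w$, the unknown $w$ solves a problem with homogeneous Dirichlet and Cauchy data:
\begin{equation*}
\begin{cases}
\Box_g w = -\Box_g F - V(t,x,F+w) =: \mathcal{N}(w) & \text{in } (0,T)\times M,\\
w = 0 & \text{on } \Sigma,\\
w(0,\cdot)=0,\quad \partial_t w(0,\cdot)=0 & \text{in } M.
\end{cases}
\end{equation*}

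Next I would set up the linear solution operator. For the linear wave equation $\Box_g v = h$ with zero Dirichlet and Cauchy data, global hyperbolicity provides the foliation $\M \cong \R\times M$ with time function $t$ and the time-like, null-convex lateral boundary $\Sigma$; standard energy estimates for the wave equation on such a cylinder (differentiating the equation in $t$, using the metric's product structure $g = -\beta\,dt^2 + g_0(t,x)$ and elliptic regularity for $\Delta_{g_0(t)}$ on $M$ with Dirichlet conditions) yield $v \in X_s$ with $\|v\|_{X_s} \le C_T \|h\|_{\bigcap_{k=0}^{s-1}C^k([0,T];H^{s-1-k}(M))}$, the constant $C_T$ depending on $T$, $s$, and the metric. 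Call this solution operator $L$. Then I would define the map $\Phi(w) := L(\mathcal{N}(w))$ on the closed ball $B_\rho = \{w\in X_s : \|w\|_{X_s}\le \rho\}$ for suitably small $\rho$ and $\varepsilon_0$. The core estimates are: (i) $\mathcal{N}$ maps $B_\rho$ into the source space with norm $\lesssim \varepsilon_0 + \rho^2$, and (ii) $\mathcal{N}$ is a contraction there, $\|\mathcal{N}(w_1)-\mathcal{N}(w_2)\| \lesssim (\varepsilon_0+\rho)\|w_1-w_2\|_{X_s}$. Both follow from the algebra property of $H^{s-k}$ for $s\ge 5$ (so $s-\lfloor s/2\rfloor$ or so exceeds $n/2$ in the relevant range — the threshold $s\ge 5$ is chosen to make $X_s$ a Banach algebra under the relevant products and to control compositions), from Assumption~\ref{assump2} which gives $V(t,x,0)=\partial_z V(t,x,0)=0$ so that the Taylor expansion of $z\mapsto V(t,x,z)$ starts at order two and $\|V(\cdot,a)-V(\cdot,b)\| \lesssim (\|a\|+\|b\|)\|a-b\|$ locally, and from Assumption~\ref{assump1} (holomorphicity in $z$ with values in $C^\alpha$) which makes the Nemytskii operator $w\mapsto V(t,x,F+w)$ smooth on the relevant bounded sets. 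Choosing $\rho \sim \varepsilon_0$ small and then $\varepsilon_0$ small enough that $C_T(\varepsilon_0+\rho^2)\le\rho$ and $C_T(\varepsilon_0+\rho)<1$, the Banach fixed-point theorem gives a unique $w\in B_\rho$, hence a unique small solution $u=F+w\in X_s$. Uniqueness in the full space (not just the ball) follows from the contraction estimate together with a Grönwall/continuity-in-time argument splitting $[0,T]$ into short subintervals on which the argument closes without a smallness assumption on $\|w\|$.

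The main obstacle is the linear energy estimate with homogeneous Dirichlet data at the regularity level $X_s$: one must commute $\partial_t^k$ through $\Box_g$ on the time-dependent cylinder $(0,T)\times M$, control the resulting commutator terms involving $t$-derivatives of $\beta$ and $g_0$, and convert the $H^1$-type spatial control coming from the energy identity into full $H^{s-k}$ control via elliptic regularity for the $t$-dependent Laplace–Beltrami operator on $M$ with Dirichlet boundary conditions — all while keeping track of the compatibility conditions so that no boundary-layer obstructions appear at $t=0$. This is classical for smooth metrics on compact $M$ (see e.g. the references on nonlinear hyperbolic well-posedness cited in the introduction), but assembling it cleanly with the global-hyperbolicity normal form, and checking that the nonlinear source $\mathcal{N}(w)$ indeed lies in $\bigcap_{k=0}^{s-1}C^k([0,T];H^{s-1-k}(M))$ with the claimed bounds, is the part that requires care; everything else is a routine contraction-mapping bookkeeping exercise.
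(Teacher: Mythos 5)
Your proposal is correct and follows essentially the same route as the paper: extend $f$ into the interior, reduce to homogeneous Dirichlet and Cauchy data, and run a contraction mapping in $\bigcap_{k=0}^s C^k([0,T];H^{s-k}(M))$ using Assumptions \ref{assump1} and \ref{assump2} to get the quadratic smallness of the nonlinearity. The only real difference is that the linear energy estimate you flag as the main obstacle is not reproved in the paper; it is quoted from the literature (a linear well-posedness theorem for hyperbolic equations on such cylinders, cf.\ also Proposition \ref{prop:wellposedness_linear_wave}), while the paper organizes the iteration by splitting the source as $\mathscr{F}(t,x,h)+G(t,x,\widetilde u,h)\widetilde u$ via the fundamental theorem of calculus, which is just a bookkeeping variant of your map $\Phi(w)=L(\mathcal{N}(w))$.
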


\begin{proof}
%Fix $m \geq 5$. We assume that $f \in C^{m+1}([0, T] \times \p M)$ and  for some small   $\varepsilon_0>0$. 

We begin by extending the boundary data $f$ to the interior of the manifold. By standard extension theory, see for instance \cite[Lemma 5.34]{lee2012smooth}, there exists a function $h \in C^{s+1}([0, T] \times M)$ such that $h|_{\Sigma}=f$ and satisfies the inequality
\begin{equation}
\label{eq:est_solution_boundary_condition}
\|h\|_{C^{s+1}((0, T) \times M)} \le C\|f\|_{C^{s+1}(\Sigma)}
\le
C\varepsilon_0.
\end{equation}

Let $\widetilde{u}=u-h$, where $u$ is a solution of the   problem \eqref{eq:ibvp_semilinear_wave}. Then it is straightforward to see that $\widetilde{u}$ is a solution to the  problem
\[
\begin{cases}
\square_g \widetilde{u}= -\square_g h - V(t,x, \widetilde{u}+h) & \text { in }(0,T) \times M,
\\
\widetilde{u}=0 & \text { on } \Sigma,
\\
\widetilde{u}(0,\cdot)=\partial_t \widetilde{u}(0,\cdot)=0 & \text { in } M.
\end{cases}
\]
Let us write $-\square_g h - V(t,x, \widetilde{u}+h)=\mathscr{F}(t,x, h)+G(t,x, \widetilde{u}, h) \widetilde{u}$, where $\mathscr{F}(t,x,h)=-\square_g h-V(t,x, h)$ and
\begin{equation}
\label{eq:def_G}
G(t,x, \widetilde{u}, h)=-\int_0^1 \partial_z V(t,x, h+\tau \widetilde{u}) d \tau.
\end{equation}

%We can write $\mathscr{F}(x, h)=\mathscr{F}(t, y, h)$ using the notation $x=(t, y)$. 
According to the expansion \eqref{eq:expansion_V}, we see that $V(t,x, z)$ is smooth in the $z$-variable. Hence, it follows from the Sobolev embedding theorem that 
\[
\sup _{t \in(0,T)} \sum_{k=0}^{s-1}\left\|\partial_t^k \mathscr{F}(t, \cdot, h)\right\|_{H^{s-k-1}(\Sigma)} 
\leq 
C \sup _{t \in(0,T)} \sum_{k=0}^{s-1}\left\|\partial_t^k \mathscr{F}(t, \cdot, h)\right\|_{C^{s-k-1}(\Sigma)}. 
\]
Due to the inequality \eqref{eq:est_solution_boundary_condition}, as well as the fact that $\Box_g$ is a second order hyperbolic operator, we obtain the estimate
\[
\|\square_g h\|_{C^{s-1}((0, T) \times M)} \leq C \|h\|_{C^{s+1} ((0, T) \times  M)} \leq C \varepsilon_0.
\]
On the other hand, since $V(t,x, z)$ is smooth and $V(t,x, z) = \mathcal{O}(|z|^2)$ near $z=0$, we have
\[
\|V(\cdot, h)\|_{C^{s-1}((0, T) \times M)} 
\leq 
C \|h\|_{C^{s-1}((0, T) \times M)}^2
\leq 
C\varepsilon_0^2.
\]
and
\begin{equation}
\label{e.w11041}
\partial_z V(t,x,z)=\mathcal O(|z|).
\end{equation}
Therefore, it follows from the previous two inequalities that
\[
\sup _{t \in(0,T)} \sum_{k=0}^{s-1}\left\|\partial_t^k \mathscr{F}(t, \cdot, h)\right\|_{H^{s-k-1}(\Sigma)} 
\le
C\sup _{t \in(0,T)} \sum_{k=0}^{s-1}\left\|\partial_t^k \mathscr{F}(t, \cdot, h)\right\|_{C^{s-k-1}(\Sigma)}
\le
C\varepsilon_0.
\]

%\WW{In other words, 
%$$
%H(x, z) \sim h_2(x) z^2+h_3(x) z^3+\ldots .
%$$
%Because the expansion starts at order 2 , we have $H(x, 0)=0$ and $\partial_z H(x, 0)=0$.}

For $R>0$, we define the set $Z(R, T)$ as follows:
\[
Z(R, T) = \left\{w \in \bigcap_{k=0}^s W^{k, \infty}\left((0,T) ; H^{s-k}(M)\right):
\|w\|_Z^2:=\sup _{t \in(0,T)} \sum_{k=0}^s \left\|\partial_t^k w(t,\cdot)\right\|_{H^{s-k}(M)}^2 \leq R^2\right\}.
\] 
Since $\partial_z V(t,x, z)$ vanishes linearly in $z$, the function $G$ defined in \eqref{eq:def_G} satisfies
\[
G(t,x, \widetilde{u}, h) \in \bigcap_{k=0}^s W^{k, \infty}\left((0,T) ; H^{s-k}(M)\right).
\]
Furthermore, for any function $\widetilde{u} \in Z\left(\rho_0, T\right)$ with $\rho_0$ sufficiently small, due to the inequalities \eqref{eq:est_solution_boundary_condition} and \eqref{e.w11041}, it holds that  
\[
\|G(t,x, \widetilde{u}, h)\|_Z 
\leq 
\int_0^1 \|\partial_z V(t,x, h+\tau \widetilde{u})\|_Z d \tau
\leq
C\left(\|h\|_Z+\|\widetilde{u}\|_Z\right) \leq C\left(\varepsilon_0+\|\widetilde{u}\|_Z\right).
\]

Given a function $\widetilde{w} \in Z\left(\rho_0, T\right)$,   consider the linear initial boundary value problem
\begin{equation}\label{e.w04201}
\begin{cases}
\square_g \widetilde{u}-G(t,x, \widetilde{w}, h) \widetilde{w}=\mathscr{F}(t,x, h), & t \in(0,T), 
\\
{\color{black}{\widetilde{u}=0}},  & t \in(0,T), 
\\
\widetilde{u}(0,\cdot)=\p_t  \widetilde{u}(0,\cdot)=0 .
\end{cases}
\end{equation}
By \cite[Theorem 3.1]{DH},  this problem has a unique solution $\widetilde{u} \in \bigcap_{k=0}^s C^k \left((0,T) ; H^{s-k}(M)\right)$ that satisfies the estimate
\[
\|\widetilde{u}\|_Z \leq C\left(\varepsilon_0+\varepsilon_0\|\widetilde{w}\|_Z+\|\widetilde{w}\|_Z^2\right) e^{K T},	
\]
where $C$ and $K$ are positive constants depending only on the coefficients of the equation. 

Let us define the map $\mathscr{T}: Z(\rho_0, T)  \to \bigcap_{k=0}^s C^k\left((0,T) ; H^{s-k}(M)\right)$ by  $\mathscr{T}(\widetilde{w}) = \widetilde{u}$, where $\widetilde{u}$ is a solution of the problem \eqref{e.w04201}. By setting $\varepsilon_0=\frac{e^{-K T}}{2 C} \rho_0$ and choosing $\rho_0>0$ small enough such that
\begin{equation}
\label{eq:choice_rho0}
C\left(\varepsilon_0+\varepsilon_0 \rho_0+\rho_0^2\right) e^{K T}<\rho_0,
\end{equation}
we see that $\mathscr{T}$ indeed maps $Z\left(\rho_0, T\right)$ to itself.

We now assume for $j=1,2$ that $\widetilde{u}_j$ satisfies the equation
\[
\begin{cases}
\square_g \widetilde{u}_j-G\left(t,x, \widetilde{w}_j, h\right) \widetilde{w}_j=\mathscr{F}(t,x, h), \quad t \in(0,T),
\\
\widetilde{u}_j(0,\cdot)=\p_t \widetilde{u}_j (0,\cdot)=0.
\end{cases}
\]
Then we get that $\widetilde{u}_j= \mathscr{T} (\widetilde{w}_j)$. Also, it follows from the fundamental theorem of calculus  that
\[
\square_g\left(\widetilde{u}_1-\widetilde{u}_2\right)
=
-\left(\int_0^1 \partial_z V \left(t,x, h+\widetilde{w}_2+\tau\left(\widetilde{w}_1-\widetilde{w}_2\right)\right) d \tau\right)\left(\widetilde{w}_1-\widetilde{w}_2\right).
\]
Thanks to the inequality   \eqref{eq:choice_rho0}, we get
\[
\left\|\mathscr{T} (\widetilde{w}_1)-\mathscr{T} (\widetilde{w}_2)\right\|_Z=\left\|\widetilde{u}_1-\widetilde{u}_2\right\|_Z \leq C\left(\varepsilon_0+\rho_0\right) e^{K T}\left\|\widetilde{w}_1-\widetilde{w}_2\right\|_Z.
\]
By choosing $\rho_0$ sufficiently small such that $C\left(\varepsilon_0+\rho_0\right) e^{K T}<1$, we conclude that the map $\mathscr{T}$ is a contraction. Consequently, the equation \eqref{e.w04201} has a unique solution $\widetilde{u} \in Z\left(\rho_0, T\right)$. Applying \cite[Theorem 3.1]{DH} again, we have
\begin{equation}
\label{eq:def_Em}
\widetilde{u} \in \bigcap_{k=0}^s C^k\left((0,T) ; H^{s-k}(M)\right)=: E^s((0,T)\times M).
\end{equation}
This completes the proof of Theorem \ref{thm:wellposedness}.
\end{proof}

We next state the well-posedness of the linear wave equation, which is given in the following proposition. This result was established in \cite[Proposition 8]{Lassas_Liimatainen_Potenciano_Tyni} on globally hyperbolic Lorentzian manifolds.  

\begin{proposition}
\label{prop:wellposedness_linear_wave}
Let $((0,T) \times M, g)$ be a globally hyperbolic Lorentzian manifold with smooth boundary, and let $\Sigma = (0,T)\times \p M$ be its lateral boundary. Let $s\in \N$. Assume that the functions $F\in E^s((0,T)\times M)$, $f\in H^{s+1}(\Sigma)$, $u_0 \in H^{s+1}(M)$, and $u_1 \in H^s(M)$ satisfy the compatibility condition. Then the equation
\[
\begin{cases}
\Box_g u=F & \text{ in } (0,T) \times M,
\\
u=f &\text{ on } \Sigma,
\\
u(0,\cdot)=u_0, \quad \p_tu(0,\cdot)=u_1  &\text{ in }  M
\end{cases}
\]
has a unique solution $u\in E^{s+1}((0,T)\times M)$, which satisfies the estimate
\[
\|u\|_{E^{s+1}((0,T)\times M)}
\le
C\left(\|F\|_{E^s((0,T)\times M)}
+
\|f\|_{H^{s+1}(\Sigma)}
+
\|u_0\|_{H^{s+1}(M)}
+
\|u_1\|_{H^{s}(M)}\right).
\]
Furthermore, we have $\p_\nu u|_{\Sigma} \in H^s(\Sigma)$.
\end{proposition}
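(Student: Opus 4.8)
The plan is to reduce the inhomogeneous Dirichlet problem to one with homogeneous boundary data, apply the linear theory already invoked in the proof of Theorem \ref{thm:wellposedness}, and then obtain the normal-derivative trace by a separate multiplier (Rellich--Morawetz type) identity. The hard part will be the last step: the conclusion $\pd_\nu u|_\Sigma\in H^s(\Sigma)$ is strictly stronger than what the Sobolev trace theorem yields from $u\in E^{s+1}((0,T)\times M)\hookrightarrow H^{s+1}((0,T)\times M)$ (which would only give $H^{s-1/2}(\Sigma)$), so it requires a hidden-regularity estimate in the spirit of Lasiecka--Lions--Triggiani rather than soft interpolation.

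First I would extend the boundary value. Using the product structure $(0,T)\times M$ together with the trace theory on the cylinder, one extends $f$ to a function $H\in E^{s+1}((0,T)\times M)$ with $H|_\Sigma=f$, with $\pd_t^\ell H(0,\cdot)$ matching the data prescribed at $\Sigma\cap\{t=0\}$ by the compatibility conditions, and with $\|H\|_{E^{s+1}((0,T)\times M)}\le C\|f\|_{H^{s+1}(\Sigma)}$; using the equation and the compatibility conditions one arranges in addition that $\Box_g H$ agrees with $F$ to sufficiently high order along $\Sigma$, so that the reduced source satisfies $F-\Box_g H\in E^s((0,T)\times M)$. Then $v=u-H$ solves $\Box_g v=F-\Box_g H$ in $(0,T)\times M$, $v|_\Sigma=0$, $v(0,\cdot)=u_0-H(0,\cdot)\in H^{s+1}(M)$, $\pd_t v(0,\cdot)=u_1-\pd_t H(0,\cdot)\in H^s(M)$, with compatibility conditions preserved. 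Applying \cite[Theorem 3.1]{DH} (exactly as in the proof of Theorem \ref{thm:wellposedness}) gives a unique $v\in E^{s+1}((0,T)\times M)$ together with the corresponding energy estimate, and adding back $H$ produces $u\in E^{s+1}((0,T)\times M)$ satisfying the stated bound. Uniqueness follows from the uniqueness in \cite[Theorem 3.1]{DH} applied to the difference of two solutions.

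For the trace statement I would argue as follows. Since $(\M,g)$ is globally hyperbolic, $\Sigma$ is time-like, hence non-characteristic for $\Box_g$. Choose a smooth vector field $X$ on $\overline{(0,T)\times M}$ that coincides with the unit outer normal $\nu$ in a collar neighborhood of $\Sigma$, pair $\Box_g u=F$ with $Xu$, and integrate by parts over $(0,T)\times M$. The boundary integral over $\Sigma$ then contains the term $\tfrac12|\pd_\nu u|^2$ with a definite sign, while the tangential derivatives of $u$ along $\Sigma$ are determined by $f$; every remaining term is controlled by $\|u\|_{E^1((0,T)\times M)}$ and $\|F\|_{L^2((0,T)\times M)}$, which are already bounded by the data. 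This yields $\pd_\nu u|_\Sigma\in L^2(\Sigma)$ with the right estimate. To upgrade to $H^s(\Sigma)$, differentiate the equation along $\pd_t$ and along vector fields tangent to $\Sigma$, commute these operators through $\Box_g$ (the commutators are lower order and absorbed using $u\in E^{s+1}$), and repeat the multiplier identity for the differentiated equations; the missing normal derivatives of the trace that appear are traded for tangential ones by using the equation itself along $\Sigma$, which is legitimate precisely because $\Sigma$ is non-characteristic. An induction on the number of derivatives then gives $\pd_\nu u|_\Sigma\in H^s(\Sigma)$. The one point requiring care is the behavior near the corners $\Sigma\cap\{t=0\}$ and $\Sigma\cap\{t=T\}$, where the compatibility conditions are exactly what make the boundary contributions in the multiplier identities well defined and controllable.
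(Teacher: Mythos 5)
The paper does not actually prove Proposition \ref{prop:wellposedness_linear_wave}: it is quoted directly from \cite[Proposition 8]{Lassas_Liimatainen_Potenciano_Tyni}, which in turn rests on sharp (Lasiecka--Lions--Triggiani type) regularity theory for the nonhomogeneous Dirichlet problem. You are therefore attempting to supply a proof the authors deliberately outsourced, and the place where your argument breaks is exactly where the quoted result is nontrivial. The gap is in your reduction step. You ask for an extension $H$ of $f$ with both $\|H\|_{E^{s+1}((0,T)\times M)}\le C\|f\|_{H^{s+1}(\Sigma)}$ and $F-\Box_g H\in E^s((0,T)\times M)$. These two demands are incompatible in general: since $\Box_g$ is second order, $H\in E^{s+1}$ only yields $\Box_g H\in E^{s-1}$, and because $f$ has only $s+1$ derivatives on $\Sigma$, no extension operator can produce $\Box_g H$ at the $E^s$ level with a bound by $\|f\|_{H^{s+1}(\Sigma)}$ --- that would require $H$ to carry one more derivative than trace theory permits, or equivalently would force $\partial_t^2 H-\Delta_g H$ to exhibit a cancellation that makes $H$ essentially an (approximate) solution of the wave equation itself, which is circular. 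Your proposed remedy, arranging that $\Box_g H$ agree with $F$ to high order along $\Sigma$, fixes compatibility at the corner but not the interior Sobolev regularity of the reduced source, which is the actual obstruction. Consequently the route ``extend, then apply \cite[Theorem 3.1]{DH}'' delivers only $u\in E^{s}$ (a loss of one derivative at the time-slice level), not the no-loss conclusion $u\in E^{s+1}$ with the stated estimate. That no-loss interior regularity for nonhomogeneous Dirichlet data is, like the trace statement, part of the hidden-regularity content of the proposition; it is obtained by transposition/duality built on the boundary trace estimate (or by citation, as in the paper), not by the extension trick.

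Your second part is sound in spirit: the multiplier identity with a vector field equal to $\nu$ near $\Sigma$ is indeed the classical mechanism giving $\partial_\nu u|_\Sigma\in L^2(\Sigma)$ for finite-energy solutions with $L^2$ source, and commuting with $\partial_t$ and tangential fields, using the equation to trade normal for tangential derivatives on the non-characteristic surface $\Sigma$, is the standard way to reach $H^s(\Sigma)$; the corner terms are handled by the compatibility conditions as you say. But as written this step presupposes $u\in E^{s+1}$, which your first step has not established, so the argument as a whole does not close. To make it self-contained you would need to prove the sharp Dirichlet regularity (e.g., base case by duality against the $L^2(\Sigma)$ normal-derivative estimate for the backward homogeneous problem, then tangential differentiation and induction), or simply invoke \cite[Proposition 8]{Lassas_Liimatainen_Potenciano_Tyni} as the paper does.
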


\section{Construction of Gaussian Beam Solutions}
\label{sec:Gaussian_beam}

Our goal of this section is to  construct  solutions of the linear wave equation 
\begin{equation}
\label{eq:linear_wave_forward}
\begin{cases}
\Box w  = 0 & \text{ in }  (0,T)\times M ,
\\
w=f & \text{ on }  \Sigma,
\\
w(0,\cdot)=0, \quad \p_t w(0,\cdot)=0 & \text{ in }   M,
\end{cases}
\end{equation}
which are of the form
\[
w(t,x)=e^{i\rho \varphi(t,x)}a_{\rho} (t,x)+r_{\rho}(t,x).
\]
We shall also construct a solution $w_0$ satisfying the backward wave equation
\begin{equation}
\label{eq:linear_wave_backward}
\begin{cases}
\Box w_0 = 0  & \text{ in }  (0,T)\times M,
\\
w_0=f_0 &\text{ on }  \Sigma,
\\
w_0(T,\cdot)=0, \quad \p_t w_0(T,\cdot)=0 & \text{ in }  M,
\end{cases}
\end{equation}
given by 
\[
w_0(t,x)=e^{i\rho \varphi_0(t,x)}a_{\rho,0} (t,x)+r_{\rho,0}(t,x).
\]
Here the principal term $v_\rho := e^{i\rho \varphi(t,x)}a_\rho (t,x)$, called the Gaussian beam,  concentrates on a null geodesic $\gamma: (a-\delta, b+\delta) \to (0,T)\times M$, $r_{\rho}$ is a remainder term that vanishes in a suitable sense as $\rho \to \infty$. In the Gaussian beam $v_\rho$, $\varphi$ is a complex-valued phase function, and $a_{\rho}$ is a smooth amplitude term.

We shall utilize a reflection argument to construct the Gaussian beams. This approach was  developed in \cite{Kenig_Salo} in the setting of Riemannian manifolds, and \cite{Hintz_Uhlmann_Zhai_4th_order} for globally hyperbolic Lorentzian manifolds. We take this approach for the following reason:
Since the Neumann data is only measured on parts of the boundary, the integral identity \eqref{eq:integral_identity} contains an integral over the inaccessible portion of the boundary. Hence, we need to construct a solution $w_0$ so that this integral vanishes at the limit $\rho \to \infty$. In partial data inverse problems for the linear wave equation,   the authors of \cite{Liu_Saksala_Yan_potential} utilized Carleman estimates to show that such terms vanish in a suitable limit. However, this approach relies on the Riemannian manifold having a specific product structure, see \cite{Ferreira_Kenig_Salo_Uhlmann}. It also requires that the Neumann data is measured on approximately half of the lateral boundary of the space-time. In contrast, our method requires only minimal geometric assumptions on the Lorentzian manifold or the measurement set.

The construction will be presented in the following two subsections. In Subsection \ref{subsec:Gaussian_beam_w123} we first consider the case where the Gaussian beams do not reflect on $\Sigma$ and present the construction of the phase function, amplitude, and remainder.  Then in Subsection \ref{subsec:construction_w0} we incorporate boundary reflections and construct the Gaussian beam quasimodes that are small near $\Sigma$. 

\subsection{Construction of the phase function, amplitude, and remainder}
\label{subsec:Gaussian_beam_w123}

We first discuss the construction of the phase function $\varphi(t,x)$, the amplitude $a_\rho(t,x)$, as well as the remainder term $r(t,x)$. The construction mainly follows from the ideas presented in \cite[Section 3]{Feizmohammadi_et_all_2019} and \cite[Section 2.4]{KKL_book}, and we provide the detailed construction for the sake of completeness. The proof is rather long and technical. Thus,  we  split it into the following steps.

\subsubsection{Fermi coordinates}
\label{subsec:Fermi_coordinates}

Let us begin the construction by recalling Fermi coordinates near a null geodesic $\gamma$.

\begin{lemma}
\label{lem:Fermi_coordinates}
\cite[Lemma 1]{Feizmohammadi_Oksanen_semilinear_Euclidean} Let $(\M,g)$ be a $(1+n)$-dimensional Lorentzian manifold with $n \ge 2$. Let $\delta>0$, $a<b$, and let $\gamma: (a-\delta, b+\delta) \to \mathcal{M}$ be a null geodesic on $\M$. Then there exists a coordinate neighborhood $(U,\Phi)$ of $\gamma([a,b])$, with the coordinates denoted by $(z_0=s, z_1, \dots, z_n)$, such that
\begin{enumerate}
\item[(i)] $\Phi(U) = (a-\delta', b+\delta') \times B(0,\delta')$, where $B(0,\delta')$ is a ball in $\R^n$ with a sufficiently small radius $\delta'>0$.

\item[(ii)] $\Phi(\gamma(s)) = (s,0,\dots, 0)$.
\end{enumerate}
Moreover, in this coordinate system, the metric $g$ can be written as
\[
g|_\gamma = 2ds dz_1 + \sum_{\alpha=2}^n (dz_\alpha)^2,
\]
and $\p_i g_{jk}|_{\gamma} = 0$ for $i,j,k=0,\dots, n$.
\end{lemma}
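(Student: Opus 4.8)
The plan is to build the coordinate system by propagating a suitable frame along $\gamma$ and then using the exponential map in the directions transverse to $\gamma$. First I would fix the null geodesic $\gamma$ parametrized on $(a-\delta, b+\delta)$ and set $e_0 = \dot\gamma$ along the curve; since $\gamma$ is null, $g(e_0,e_0)=0$. Next I would choose a second null vector field $e_1$ along $\gamma$, parallel along $\gamma$, normalized so that $g(e_0,e_1)=1$ and $g(e_1,e_1)=0$ — such a vector can be picked at a single point $\gamma(a)$ by linear algebra (the orthogonal complement of a null vector is degenerate and contains it, but one can always complete $e_0$ to a null pair), and then parallel transport preserves all the inner products. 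Finally, on the codimension-$2$ space-like subspace $\mathrm{span}\{e_0,e_1\}^{\perp}$ at $\gamma(a)$, pick an orthonormal basis $e_2,\dots,e_n$ and parallel transport it along $\gamma$; parallelism keeps $\{e_\alpha\}$ orthonormal and keeps it $g$-orthogonal to $e_0,e_1$ for all $s$. This yields a parallel frame $\{e_0(s), e_1(s), e_2(s),\dots,e_n(s)\}$ along $\gamma$ with Gram matrix exactly $2\,ds\,dz_1 + \sum_{\alpha\ge 2}(dz_\alpha)^2$ at every point of the curve.

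Then I would define the coordinate map $\Phi^{-1}$ by
\[
(s, z_1, z_2, \dots, z_n) \longmapsto \exp_{\gamma(s)}\!\Big( z_1 e_1(s) + \sum_{\alpha=2}^n z_\alpha e_\alpha(s)\Big).
\]
Its differential at a point $(s,0,\dots,0)$ on $\gamma$ is the identity (sending $\partial_s \mapsto \dot\gamma(s)=e_0$ and $\partial_{z_i}\mapsto e_i(s)$), so by the inverse function theorem $\Phi$ is a diffeomorphism from a neighborhood $U$ of $\gamma([a,b])$ onto a set of the form $(a-\delta', b+\delta')\times B(0,\delta')$ for $\delta'>0$ small; this gives (i) and (ii) and, by construction of the frame, the stated form of $g|_\gamma$.

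For the last assertion, $\partial_i g_{jk}|_\gamma = 0$, I would argue as follows. Because $e_0,\dots,e_n$ are parallel along $\gamma$, for any $j,k$ the function $s\mapsto g_{jk}(\gamma(s)) = g(e_j(s),e_k(s))$ is constant, so $\partial_s g_{jk}|_\gamma = \partial_0 g_{jk}|_\gamma = 0$. For the transverse derivatives $\partial_\alpha g_{jk}|_\gamma$, $\alpha \ge 1$, I would use that the curves $z_1\mapsto \exp_{\gamma(s)}(z_1 e_1(s))$ and $z_\alpha \mapsto \exp_{\gamma(s)}(z_\alpha e_\alpha(s))$ are geodesics through $\gamma(s)$ with initial velocity $e_i(s)$; hence the Christoffel symbols evaluated along $\gamma$ satisfy $\Gamma^{l}_{ii}|_\gamma = 0$ for each $i\ge1$, and more precisely, a standard polarization/symmetrization argument (the same one used in constructing Fermi coordinates along a geodesic) gives $\Gamma^{l}_{jk}|_\gamma = 0$ for all $j,k$. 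Combined with $\partial_0 g_{jk}|_\gamma = 0$ and the formula $\partial_i g_{jk} = g_{lk}\Gamma^l_{ij} + g_{jl}\Gamma^l_{ik}$ (valid at points where the coordinate vector fields agree with the parallel frame), this yields $\partial_i g_{jk}|_\gamma = 0$ for all $i,j,k$.

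The main obstacle I anticipate is the bookkeeping in the last step: one must verify carefully that the transverse geodesic spray built from \emph{all} the vectors $e_1,\dots,e_n$ simultaneously (not just one at a time) forces every Christoffel symbol to vanish on $\gamma$. The clean way is to note that for any fixed unit/null combination $v = \sum_{i\ge 1} c_i e_i(s)$, the curve $t\mapsto \exp_{\gamma(s)}(tv)$ is a geodesic, so $\Gamma^l_{pq}|_\gamma c_p c_q = 0$ for all choices of $c$; polarizing in $c$ gives $\Gamma^l_{pq}|_\gamma = 0$ for $p,q\ge 1$, and the mixed symbols $\Gamma^l_{0q}|_\gamma$ vanish because $s$-differentiation of the parallel-transport relations $g(e_0,e_q) = \mathrm{const}$ controls them, while $\Gamma^l_{00}|_\gamma = 0$ since $\gamma$ itself is a geodesic. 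Everything else — smallness of $\delta'$, openness of $U$, the precise normal-coordinate nature of the transverse slices — is routine once the frame is in place. Since this is exactly \cite[Lemma 1]{Feizmohammadi_Oksanen_semilinear_Euclidean}, I would simply cite it and recall this construction for completeness.
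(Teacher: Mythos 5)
Your construction (a parallel pseudo-orthonormal null frame $e_0=\dot\gamma, e_1, e_2,\dots,e_n$ along $\gamma$, the transverse exponential map $(s,z')\mapsto \exp_{\gamma(s)}(z_1e_1(s)+\sum_{\alpha\ge 2}z_\alpha e_\alpha(s))$, and then the normal-coordinate polarization argument to kill the Christoffel symbols on $\gamma$) is precisely the standard proof of this lemma; the paper itself gives no proof, citing \cite{Feizmohammadi_Oksanen_semilinear_Euclidean} and remarking only that the coordinates arise from parallel transport of a frame along $\gamma$, so your proposal is correct and follows essentially the same route. One small tightening: the vanishing of the mixed symbols $\Gamma^l_{0q}|_\gamma$ should be deduced directly from parallelism of the frame (along $\gamma$ one has $\partial_{z_q}=e_q(s)$, hence $\nabla_{\partial_s}\partial_{z_q}|_\gamma=\nabla_{\dot\gamma}e_q=0$), since differentiating the constancy of the inner products $g(e_j,e_k)$ along $\gamma$ only yields the antisymmetry relations $\Gamma_{0j,k}+\Gamma_{0k,j}=0$ rather than vanishing.
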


The Fermi coordinates are constructed by performing parallel transport of an orthonormal frame along the null geodesic $\gamma$ \cite{gu}. The condition $\p_i g_{jk}|_{\gamma} = 0$ ensures that the Christoffel symbols vanish along $\gamma$, which simplifies subsequent computations. 
Furthermore, this coordinate system is essential for the construction of Gaussian beams, as it allows us to work in a neighborhood where the metric has a simple form along the geodesic.

\subsubsection{Construction of the phase function}
\label{subsec:phase}

Assume that $\gamma$ is a null geodesic. In this subsection we construct the phase function $\varphi$. To this end, we define the set
\[
\Omega 
= 
\left\{ \left(s, z'\right): s \in [a-\delta', b+\delta'], \: |z'|<\delta \right\},
\] 
where $0<\delta <\delta'$ is small enough such that $\Omega$ does not intersect the hypersurfaces $\{0\}\times M$ or $\{T\}\times M$. 

We shall construct a Gaussian beam via the WKB ansatz
\[
v_\rho(s, z';\rho) = e^{i \rho \varphi(s,z')} a_\rho (s,z'),
\]
which approximately solves the wave equation $\Box_g v_\rho=0$ in  Fermi coordinates. Here the functions $\varphi$ and $a_\rho$ are given by
\begin{equation}
\label{eq:form_phase_amplitude}
\varphi=\sum_{j=0}^N \varphi_j\left(s, z^{\prime}\right)
\: \text { and } \: 
a_\rho \left(s, z^{\prime}\right) = \chi\left(\frac{\left|z^{\prime}\right|}{\delta^{\prime}}\right) \sum_{k=0}^N \rho^{-k} b_k\left(s, z^{\prime}\right), 
\end{equation}
where $b_k\left(s, z^{\prime}\right)=\sum_{j=0}^N b_{k, j}\left(s, z^{\prime}\right)$ is written in the Fermi coordinates $z=(s,z')$ near $\gamma$. Here, for all $j, k=0, \ldots, N$, the functions $\varphi_j$ and $v_{k, j}$ are complex-valued homogeneous polynomials of degree $j$ with respect to the variable $z'\in \R^n$, and $\chi(\tau)$ is a non-negative smooth function with compact support such that $\chi(\tau)=1$ for $|\tau| \leqslant \frac{1}{4}$ and $\chi(\tau)=0$ for $|\tau| \geqslant \frac{1}{2}$.

We compute directly that
\[
\square_g \left(e^{i\rho \varphi} a_\rho \right)
= 
e^{i\rho \varphi}
\left[
\rho^2(\mathcal{S} \varphi) a_\rho
- 
i \rho \mathcal{T} a_\rho
+
\square_g a_\rho \right], 
\]
where the differential operators $\mathcal{S}, \mathcal{T}: C^\infty(\M) \to C^\infty(\M)$ are defined by the formulae
\[
\mathcal{S} \varphi=\langle d \varphi,  d \varphi\rangle_g,
\quad 
\mathcal{T} a=2\langle d \varphi,  d a \rangle_g- \left(\square_g \varphi\right) a.
\]
Here we have taken into account the fact that the metric $g$ has signature $(-,+,\ldots, +)$. This computation suggests that we need to construct the phase function $\varphi$ and the amplitude $a_{\rho}$ such that they approximately solve the eikonal equation $\mathcal{S}\varphi=0$ and the transport equation $\mathcal{T}a=0$, respectively. Specifically, we require that   $\varphi$ satisfies the condition that $\mathcal{S}\varphi$ vanishes up to the $N^{\mathrm{th}}$ order along the null geodesic $\gamma$ with respect to the transversal directions. That is, the function $\varphi$ must satisfy the equation
\begin{equation}
\label{eq:eikonal_geodesic}
\frac{\p^\alpha}{\p z^\alpha}(\mathcal{S} \varphi)(s, 0, \dots, 0)=0, \quad  s\in [a-\delta', b+\delta'],
\end{equation}
for all multi-indices $\alpha$ with $|\alpha|\le N$.
We also require that the leading term $b_0$ of the amplitude $a_\rho$ satisfies the following transport equation on $\gamma$: 
\begin{equation}
\label{eq:transport_b0}
\frac{\partial^{\alpha}}{\partial z^{\alpha}}\left(\mathcal{T} b_0\right)(s, 0,\dots, 0)=0, \quad  s\in [a-\delta', b+\delta'].
\end{equation}
Furthermore, the subsequent terms $b_k$, $k=1,\dots,N$, solve the equation
\begin{equation}
\label{eq:transport_bk}
\frac{\partial^{\alpha}}{\partial z^{\alpha}}\left(-i \mathcal{T} b_k+\square_g b_{k-1}\right)(s, 0,\dots, 0)=0, \quad  s\in [a-\delta', b+\delta'].
\end{equation}
In the previous two equations, $\alpha$ is a multi-index such that $|\alpha|\le N$. The equations \eqref{eq:eikonal_geodesic}-- \eqref{eq:transport_bk} ensure that the Gaussian beam $v_\rho$ satisfies the equation $\Box_g v_\rho = \mathcal{O}(\rho^{-N})$ near $\gamma$. 
%The order of vanishing determines the quality of the approximation. 
% We shall take $N$ sufficiently large so that the remainder term $r_\rho$ vanishes in the limit $\rho \to \infty$.

Let us now proceed to construct the phase function satisfying the conditions
\begin{equation}
\label{eq:properties_phi}
\Im \varphi \ge 0, \quad \Im \varphi|_{\gamma}=0, \quad \Im \varphi (z) \ge C|z'|^2 \: \text{ for all } z\in \Omega.
\end{equation} 
%Note that the key idea behind the Gaussian beam construction is to allow the phase function $\varphi$ to be complex-valued, with positive imaginary part away from the geodesic $\gamma$. This ensures that the Gaussian beam $v_\rho$ decays rapidly away from $\gamma$, providing the necessary concentration properties.
To this end, by the same arguments as in \cite[Subsection 4.2.1]{Feizmohammadi_Oksanen_semilinear_Euclidean},  we take
\[
\varphi_0=0, \quad \varphi_1=z_1, \quad \varphi_2(s, z')=\sum_{1 \leq i, j \leq n} H_{i j}(s) z_i z_j.
\]
Here $H$ is a symmetric matrix with $\Im H(s)>0$ for all $s\in [a-\delta', b+\delta']$. In particular,  $H$ satisfies the Riccati equation
\begin{equation}
\label{eq:Riccati}
\frac{d}{ds} H+H C H+D=0, \quad  
s\in [a-\delta', b+\delta'], \quad 
H(0)=H_0, \text { with } \Im M_0>0,
\end{equation}
where the matrices $C$ and $D$ are  defined by $C_{11}=0$, $C_{i i}=2, i=2,\dots, n$, $C_{i j}=0, i \neq j$, and $D_{i j}=\frac{1}{4}\partial_{i j}^2 g^{11}$. 

The following lemma, which was originally established in \cite[Lemma 2.56]{KKL_book}, provides the existence of solutions to \eqref{eq:Riccati}, see also \cite[Lemmas 3.2 and 3.3]{Feizmohammadi_et_all_2019}. 

\begin{lemma}
\label{lem:solution_Riccati} 
Let $\hat s_0\in [a-\delta', b+\delta']$, and let $H_0$ be a symmetric matrix such that $\Im H(\hat s_0)>0$. Then the Ricatti equation \eqref{eq:Riccati},  with the initial condition $H(\hat s_0)=H_0$, has a unique solution $H(s)$, which is symmetric and $\Im(H(s))>0$ for all $s \in \left(a-\delta', b+\delta'\right)$. Furthermore, we have that $H(s)=Z(s) Y(s)^{-1}$, where the matrices $Y$ and $Z$ solve the first order linear system
\[
\begin{cases}
\frac{d}{ds} Y(s)=C Z(s),  \quad &Y(\hat s_0)=Y_0,
\\
\frac{d}{ds} Z(s)=-D(s) Y(s), \quad &Z(\hat s_0)=H_0 Y_0.
\end{cases}
\] 
In addition, $Y(s)$ is non-degenerate and satisfies the property
\[
\det \left(\Im H(s)\right) |\det(Y(s))|^2
= 
\det \left(\Im H_0(s)\right).
\]
\end{lemma}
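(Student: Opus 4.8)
The plan is to linearize the Riccati equation \eqref{eq:Riccati}. If $(Y,Z)$ solves the stated first order linear system on a subinterval where $Y(s)$ is invertible, then a direct computation using $\dot Y = CZ$, $\dot Z = -DY$ shows
$\dot H = \dot Z Y^{-1} - ZY^{-1}\dot Y Y^{-1} = -D - (ZY^{-1})C(ZY^{-1}) = -D - HCH$
for $H:=ZY^{-1}$, together with $H(\hat s_0) = Z_0 Y_0^{-1} = H_0$; so $H$ solves \eqref{eq:Riccati}. The linear system has smooth coefficients, hence a unique solution on all of $(a-\delta',b+\delta')$ by the standard theory of linear ODEs (fixing a normalization such as $Y_0 = I$, $Z_0 = H_0$). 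Thus the entire statement reduces to showing that $Y(s)$ never becomes singular on the interval and that the resulting $H$ is symmetric with $\Im H(s) > 0$.

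For this I would introduce two quantities conserved along the flow: the bilinear form $Y^{T}Z - Z^{T}Y$ and the Hermitian-type pairing $E(s) := \overline{Y}^{T}Z - \overline{Z}^{T}Y$. Using that $C$ is a real diagonal matrix and $D_{ij} = \tfrac14\partial^2_{ij}g^{11}$ is real and symmetric, both derivatives vanish identically, so each quantity equals its value at $\hat s_0$. Since $Z_0 = H_0 Y_0$ and $H_0^{T} = H_0$, the first one is identically zero, which says that $Z^{T}Y$ is symmetric — equivalently $H = H^{T}$ once $Y^{-1}$ exists. The second evaluates to $E \equiv \overline{Y_0}^{T}(H_0 - \overline{H_0})Y_0 = 2i\,\overline{Y_0}^{T}(\Im H_0)Y_0$, so the matrix $\tfrac1i E$ is positive definite because $Y_0$ is invertible and $\Im H_0 > 0$.

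Non-degeneracy of $Y$ then follows by contradiction: if $Y(s_1)v = 0$ for some $v \neq 0$, then $v^{*}E(s_1)v = (Y(s_1)v)^{*}(Z(s_1)v) - (Z(s_1)v)^{*}(Y(s_1)v) = 0$, contradicting positive definiteness of $\tfrac1i E$. Hence $H = ZY^{-1}$ is defined on all of $(a-\delta',b+\delta')$ and is symmetric. Substituting $Z = HY$, $\overline Z = \overline H\,\overline Y$ into $E$ and using $\overline H^{T} = \overline H$ gives $E = 2i\,\overline{Y}^{T}(\Im H)Y$; comparing with the constant value of $E$ yields $\Im H(s) = (Y(s)^{*})^{-1}\overline{Y_0}^{T}(\Im H_0)Y_0\,Y(s)^{-1} > 0$, and taking determinants produces $\det(\Im H(s))\,|\det Y(s)|^{2} = |\det Y_0|^{2}\det(\Im H_0)$, which is the asserted identity under the normalization $Y_0 = I$. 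Finally, uniqueness of $H$ is immediate from Picard–Lindel\"of applied to \eqref{eq:Riccati}, whose right-hand side $-HCH - D$ is polynomial in $H$ and smooth in $s$.

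The step I expect to be the main obstacle is propagating the invertibility of $Y$ across the whole interval: a priori $ZY^{-1}$ could blow up in finite parameter time, and it is precisely the conservation of the positive Hermitian form $\tfrac1i E$ — itself a manifestation of the fact that the condition $\Im H > 0$ is preserved by the Riccati flow — that prevents $Y$ from developing a kernel. Once this is in hand, the symmetry of $H$, the determinant formula, and uniqueness are all routine bookkeeping built on that observation.
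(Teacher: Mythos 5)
Your argument is correct, and it is essentially the standard proof of this lemma: the paper itself does not prove it but cites \cite[Lemma 2.56]{KKL_book} and \cite[Lemmas 3.2 and 3.3]{Feizmohammadi_et_all_2019}, where exactly this strategy is used — linearizing the Riccati equation via $H=ZY^{-1}$, exploiting the conserved forms $Y^{T}Z-Z^{T}Y$ and $\overline{Y}^{T}Z-\overline{Z}^{T}Y$ to get symmetry and $\Im H>0$, and deducing non-degeneracy of $Y$ and the determinant identity from the conserved positive Hermitian form. Your verification of the conserved quantities (using that $C$ and $D$ are real symmetric) and the contradiction argument for $\ker Y(s)=\{0\}$ are sound, so no gap remains.
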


%\GG{{\bf Remark.} The linear system for $(Y,Z)$ provides a convenient way to compute the solution to the Riccati equation. The preservation of $\det(\Im H(s))|\det Y(s)|^2$ is crucial for controlling the amplitude growth along the geodesic. }

Furthermore, we obtain the functions $\varphi_j$, $j\ge 3$, by arguing similarly as in \cite[Subsection 4.2.1]{Feizmohammadi_Oksanen_semilinear_Euclidean}.   

\subsubsection{Construction of the amplitude}
\label{subsec:amplitude}

We next move to seek the amplitude functions $b_k$ given in \eqref{eq:form_phase_amplitude}, and let us start with the leading term $b_0$. To this end, we need to determine the terms $\left\{b_{0, k}\right\}_{k \geqslant 0}$ such that the equation \eqref{eq:transport_b0} holds for all $k= 0, \ldots, N$. 

Let $\gamma$ be a null geodesic. By Lemma \ref{lem:Fermi_coordinates},  we have on $\gamma$ that
\[
-\square_g \varphi=\sum_{i, j=0}^n g^{i j} \partial_{i j}^2 \varphi=\sum_{i=2}^n \partial_{i i}^2 \varphi=\operatorname{Tr}(C H),
\]
where the matrices $C$ and $H$ are the same as in \eqref{eq:Riccati}. 
Thus, when $|\alpha|=0$ in the transport equation \eqref{eq:transport_b0}, using the definition of $\mathcal{T}$  and in view of \eqref{eq:form_phase_amplitude}, it holds on $\gamma$ that
\begin{equation}
\label{eq:transport_b0_expansion}
\begin{aligned}
&2\langle d \varphi,  d b_0 \rangle_g- \left(\square_g \varphi\right) b_0
\\
&= \left[2\p_s b_{0,0}+ \operatorname{Tr} (CH) b_{0,0}\right] 
+
\left[2 \p_s  b_{0,1}+ \operatorname{Tr} (CH)  b_{0,1 }+ \mathcal{E}_1\right]
+
\cdots
+ \mathcal{O}(|z'|^{n+1}).
\end{aligned}
\end{equation}
Here $\mathcal{E}_j$, $j \ge 1$, is a homogeneous polynomial of degree $k$ in the $z'$-variable, and  its coefficients only depend on the functions $\{b_{0,k}\}_{k=0}^{j-1}$ and $\{\varphi_l\}_{l=0}^{j+2}$.

Let us first find the function $b_{0,0}$, which satisfies the equation
\[
2\p_s b_{0,0}+ \operatorname{Tr} (CH)=0.
\] 
It follows from the computations in \cite[Section 3.4]{Feizmohammadi_et_all_2019} that 
\begin{equation}
\label{eq:trace}
\operatorname{Tr}(C H)
=
\p_s \log \left(\operatorname{det} H(s)\right).
\end{equation}
Therefore, we see  that  $b_{0,0}$ satisfies the equation
\[
2 \p_s  b_{0,0}+ \p_s  \log \left(\operatorname{det} H(s)\right)=0, \quad   s \in [a-\delta', b+\delta'].
\]
From here, it is straightforward to deduce that
\[
b_{0,0}(s) 
=
\left(\operatorname{det} H(s)\right)^{-1/2}.
\]

We next turn our attention to find the subsequent functions $b_{0,j}$, $j\ge 1$. Thanks to \eqref{eq:transport_b0_expansion} and \eqref{eq:trace}, it suffices to solve the equation
\[
2 \p_s  b_{0,j}+ \p_s  \log \left(\operatorname{det} H(s)\right) b_{0,j}=-\mathcal{E}_j.
\]
Let us note that this is a first order equation. Thus, it has a unique solution if we impose the zero initial condition at $\hat s_0 \in [a-\delta',b+\delta']$. Therefore, we have obtained the function $b_0$.

Lastly, we construct the amplitudes $a_1, \dots, a_N$ by solving the transport equation \eqref{eq:transport_bk} recursively up to order $N$. On the null geodesic $\gamma$, these equations are similar to those for $b_{0,k}$, but the terms on the right-hand side are nonzero yet still homogeneous in $z'$. Therefore, the derivation follows from analogous arguments to those for $b_{0,k}$. In particular, the functions $b_{j,k}$ are $k^{\mathrm{th}}$ order homogeneous in $z'$ for all $1\le j,k\le N$. We refer readers to \cite[Section 4]{Feizmohammadi_Oksanen_semilinear_Euclidean}  for details.

Furthermore, by \cite[Lemma 2]{Feizmohammadi_Oksanen_semilinear_Euclidean},   the Gaussian beam $v_\rho$ satisfies the following estimate:
\[
\|v_\rho\|_{C((0,T)\times M)} = \mathcal{O}(1).
\]
%\begin{equation}
%	\label{eq:est_Gaussian_Hk}
%	\|a_\rho\|_{H^k((0,T)\times M)} = \mathcal{O}(\rho^{\frac{n}{2p}-\frac{n}{4}+k}) \quad \text{ for all } p\ge 2 \text{ and } k\in \N,
%\end{equation}
Also, for any $k\in \N$, it holds that
\begin{equation}
\label{eq:est_Gaussian_conjugated}
\|\Box_g v_\rho\|_{H^k((0,T)\times M)} = \mathcal{O}(\rho^{-K}), \quad K=\frac{N+1}{2}+\frac{n}{4}-k-2.
\end{equation}

%Summarizing the discussions above, we obtain the following result. 
%
%\begin{proposition}
%\label{prop:existence_Gaussian_forward}
%Let $n \ge 2$, and let $(\M, g)$ be a $(1+n)$-dimensional globally hyperbolic Lorentzian manifold with smooth boundary $\p \M$, i.e., $\M=(0,T)\times M$.  Let $\gamma$ be a light-like geodesic of $(\M,g)$. Then there exists a family of Gaussian beam quasimodes $v_\rho = e^{i\rho \varphi_{\rho}}a_\rho\in C^\infty((0,T)\times M)$ such that the estimates \eqref{eq:est_Gaussian_Lp} and \eqref{eq:est_Gaussian_conjugated} are valid as $\rho\to \infty$.
%\end{proposition}

\subsubsection{Construction of the remainder}
\label{subsec:remainder}

We now move to construct the remainder term $r_{\rho}$. To this end, let $v_\rho$ be a Gaussian beam constructed in the previous subsections. By Proposition \ref{prop:wellposedness_linear_wave}, the initial boundary value problem
\[
\begin{cases}
\Box_g r_\rho = -\Box_g v_\rho & \text{ in } (0,T)\times M,
\\
r_\rho=0 & \text{ on } \Sigma,
\\
r_\rho(0,\cdot)=0, \quad \p_t \rho (0,\cdot)=0 & \text{ in }  M
\end{cases}
\]
admits a unique solution $r_\rho(t,x)\in E^{s+1}((0,T)\times M)$, where the space $E^{s+1}$ was defined in \eqref{eq:def_Em}, such that
\[
\|u\|_{E^{s+1}((0,T)\times M)}
\le
C\|\Box_g v_\rho\|_{E^s((0,T)\times M)}.
\]
An application of \cite[Corollary 11]{Lassas_Liimatainen_Potenciano_Tyni} yields that
\[
\|r_\rho\|_{H^{\ell}((0,T)\times M)}
=
\mathcal{O}(\rho^{-K}),
\]
where $\ell \in \N$ is such that $k>\ell-1+\frac{n+1}{2}$. Here $k$ and $K$ are the same as in the estimate  \eqref{eq:est_Gaussian_conjugated}.
Furthermore, we choose $N$ in \eqref{eq:est_Gaussian_conjugated} sufficiently large and apply the Sobolev embedding theorem to obtain the estimate 
\begin{equation}
\label{eq:est_remainder}
\|r_\rho\|_{C((0,T)\times M)}
=
\mathcal{O}(\rho^{-\frac{n+1}{2}-2}).
\end{equation}

%We summarize the previous discussions  in the following theorem. 
%\begin{theorem}
%\label{thm:existence_solutions}
%Let $n\ge 2$, and let $(\mathcal{M},g)$ be a $(1+n)$-dimensional globally hyperbolic Lorentzian manifold. Then for all sufficiently large $\rho>0$, there exists a solution $w$ to the equation $\Box_g w=0$ of the form
%\[
%w(t,x)=v_\rho(t,x)+r_{\rho}(t,x),
%\]
%where $v_\rho\in C^\infty((0,T)\times M)$ is the Gaussian beam as in Subsection \ref{subsec:Gaussian_beam_w123}, and $r_\rho\in E^{s+1}((0,T)\times M)$ satisfies the estimate \eqref{eq:est_remainder}.
%\end{theorem}

\subsection{Construction of Gaussian beams with reflections at the boundary}
\label{subsec:construction_w0}

In this subsection we incorporate at the boundary and construct Gaussian beams that are small near the boundary.

%For the boundary term $$\int_{((0,T)\times \p M) \setminus \Gamma} (\partial_\nu \mathcal{U}^{(1,123)}-\partial_\nu \mathcal{U}^{(2,123)})w_0 dS_gdt$$, we follow the strategy of   \cite{Hintz_Uhlmann_Zhai_4th_order} in the construction of Gaussian beams with reflections at the boundary. 
Let $(\M, g)$ be a $(1+n)$-dimensional Lorentzian manifold with smooth boundary. Suppose that $E \subseteq \p \M$ is a nonempty open set, and let $R=\p \M \setminus E$. We assume that $E$ is the observation set where  geodesics can enter and exit, and $R$ is the reflecting set.

Let $\rho>0$ be a large parameter, and let $\gamma: [\mathbf{t}_0,T] \to \M$ be a broken null-geodesic, where $\mathbf{t}_1<\cdots < \mathbf{t}_N$ are the times of reflections at the boundary such that $\mathbf{t}_j\in (\mathbf{t}_0,T)$ for $j \in =1,\dots, N$. We also assume that $\gamma(\mathbf{t}_0)\in \p \M$ and $\gamma(T)\notin \p \M$. The construction follows from similar arguments as in \cite{Hintz_Uhlmann_Zhai_4th_order}. We shall only provide details of the construction near the first reflection time $\mathbf{t}_1$, and the subsequent reflections can be handled similarly. Consider Gaussian beam quasimode of the form
$$
v_\rho=v_\rho^{\mathrm{inc}}+v_\rho^{\mathrm{ref}},
$$
where $v_\rho^{\text {inc }}$ and $v_\rho^{\text {ref }}$ are the Gaussian beam solutions associated with the geodesic segments $\left.\gamma\right|_{\left[0, \mathbf{t}_1\right]}$ and   $\left.\gamma\right|_{\left[\mathbf{t}_1, \mathbf{t}_2\right]}$, respectively. We shall construct $v_\rho^{\text {inc}}$ and $v_\rho^{\text {ref}}$ such that $\left.\left(v_\rho^{\mathrm{inc}}+v_\rho^{\mathrm{ref}}\right)\right|_{\partial \M}$ is small near $\gamma\left(\mathbf{t}_1\right)$. Following the construction presented in the previous subsection, we have
$$
v_\rho^{\mathrm{inc}}=e^{i \rho \varphi^{\mathrm{inc}}} a_\rho^{\mathrm{inc}} \quad \text{ and } \quad 
v_\rho^{\mathrm{ref}}=e^{i \rho \varphi^{\mathrm{ref}}} a_\rho^{\mathrm{ref}},
$$
where the phase function $\varphi^\bullet$ and the amplitude $a^\bullet$ are given in \eqref{eq:form_phase_amplitude}. Here and for the remainder of this subsection we shall adopt the notation $\bullet$ = inc, ref.

Let $R_1$ be a small neighborhood of $\gamma\left(\mathbf{t}_1\right)$ on $\partial \M $ such that $v_\rho^{\text {inc}}$ and $v_\rho^{\text {ref}}$ are compactly supported in $R_1$. 
Let us choose the phase functions and the amplitudes satisfying the conditions
\[
\varphi^{\text {ref}}|_{\partial \M}=\varphi^{\text {inc}}|_{\partial \M} \quad \text{and} \quad a^{\text {ref}}|_{\partial \M}=- a^{\text {inc}}|_{\partial \M}
\]
up to the $N^{\mathrm{th}}$ order at $\gamma\left(\mathbf{t}_1\right) \in \partial \M$, except that
\[
d \left(\left.\varphi^{\mathrm{inc}}|_{\partial \M}\right)\right|_{\gamma\left(\mathbf{t}_1\right)} =
\left.\dot{\gamma}\left(\mathbf{t}_1-\right)^\flat\right|_{T \partial \M}
\quad   \text{ and } \quad 
d\left(\left.\varphi^{\mathrm{ref}}|_{\partial \M}\right)\right|_{\gamma\left(\mathbf{t}_1\right)} =
\left.\dot{\gamma}\left(\mathbf{t}_1+\right)^\flat\right|_{T \partial \M}.
\]
Here the notation $\flat$ means the musical isomorphism mapping vectors to co-vectors. Thus, we have on $R_1$ that 
\begin{equation}
\label{eq:est_difference_phi_a}
\left|\varphi^{\mathrm{ref}}-\varphi^{\mathrm{inc}}\right| \leq C|y|^{N+1}
\quad \text{ and } \quad 
\left|a_\rho^{\mathrm{ref}} + a_\rho^{\mathrm{inc}}\right| \leq C|y|^{N+1}.
\end{equation}

We next show that for any integer $k\ge 0$, it holds that
\begin{equation}
\label{eq:est_reflection}
\left\|v_\rho\right\|_{H^k\left(R_1\right)} \leq C \rho^{-\frac{N-k+1}{2}-\frac{3}{4}}.
\end{equation}
Let $(\tau, y)$ be the coordinates near $\gamma\left(\mathbf{t}_1\right)$ such that $\partial \M$ is parametrized by the map $y \mapsto(\mathbf{t}_1, y)$ and $\gamma (\mathbf{t}_1)$ corresponds to $(\mathbf{t}_1,0)$. Due to \eqref{eq:properties_phi}, there exists a constant $C'>0$ such that the following inequality holds on $R_1$:  
\begin{equation}
\label{eq:est_phase}
\left|e^{i\rho \varphi^{\bullet}}\right| \leq e^{-C^{\prime} \rho|y|^2}.
\end{equation}

%We also recall the amplitudes $a^{\text {inc}}$ and $a^{\text {ref}}$ are conormal distributions, with asymptotic expansions as $|\theta| \rightarrow \infty$
%\[
%a^{\bullet}(x, \theta) \sim \sum_{j \geq 0} a_j^{\bullet}(x, \theta), \quad  \quad a_j \in S^{\mu+\frac{1}{2}-j}.
%\]
%\BL{Reference?}

Let us now write $v_\rho$ as
\[
v_\rho
=
\left(e^{i\rho \varphi^{\mathrm{inc}}}-e^{i\rho \varphi^{\mathrm{ref}}}\right) a_\rho^{\mathrm{inc}}+e^{i\rho \varphi^{\mathrm{ref}}}\left(a_\rho^{\mathrm{ref}}+a_\rho^{\mathrm{inc}}\right).
\]
By the Taylor expansion, we have
\[
e^{i\rho \varphi^{\mathrm{inc}}}-e^{i\rho \varphi^{\mathrm{ref}}}=i\rho\left(\varphi^{\mathrm{inc}}-\varphi^{\mathrm{ref}}\right) \int_0^1 e^{i\rho\left(s \varphi^{\mathrm{inc}}+(1-s) \varphi^{\mathrm{ref}}\right)} ds.
\]
Due to the estimates \eqref{eq:est_difference_phi_a} and \eqref{eq:est_phase}, near $y=0$ it holds that
\[
\left|e^{i\rho \varphi^{\mathrm{inc}}}-e^{i\rho \varphi^{\mathrm{ref}}}\right| \leq C \rho|y|^{N+1} e^{-C' \rho|y|^2}.
\]
Thus, we obtain from the previous inequality, as well as the estimates \eqref{eq:est_difference_phi_a} and \eqref{eq:est_phase}, that  
\[
\left|v_\rho|_{R_1}\right|
\leq 
C \rho|y|^{N+1} e^{-C^{\prime} \rho|y|^2}.
\]

We next derive an upper bound for $\p_y^\alpha v_\rho$ on $R_1$, where $\alpha$ is a multi-index such that $|\alpha|\le k$. By a direct computation, we see that
\[
\p_y v_\rho
=
i\rho e^{i\rho \varphi^{\mathrm{inc}}} a_\rho^{\mathrm{inc}}\p_y \varphi^{\mathrm{inc}}
+
e^{i\rho \varphi^{\mathrm{inc}}} \p_y a_\rho^{\mathrm{inc}}
+
i\rho e^{i\rho \varphi^{\mathrm{ref}}} a_\rho^{\mathrm{ref}} \p_y \varphi^{\mathrm{ref}} 
+
e^{i\rho \varphi^{\mathrm{ref}}} \p_y a_\rho^{\mathrm{ref}}.
\]
Due to the construction of the phase function, we get that $\p_y \varphi^\bullet|_{y=0}=0$, which  implies that  the following inequality is valid on $R_1$:
\[
\left|\p_y \varphi^{\bullet}\right|  \leq C|y|.
\]
On the other hand, since $\p_y a_\rho^\bullet$ is smooth in $y$ near $y=0$, and $\p_y^\alpha a_\rho^\bullet|_{y=0}=0$  for all $|\alpha|\le N$, we have on $R_1$ that
\begin{equation}
\label{eq:est_amplitude_reflection_derivative}
|\p_y a_\rho^{\bullet}|\le C|y|^N.
\end{equation}
Therefore, we obtain from the inequalities \eqref{eq:est_difference_phi_a},  \eqref{eq:est_phase}, and \eqref{eq:est_amplitude_reflection_derivative} that
\[
\left|\p_y v_\rho|_{R_1}\right|
\leq 
C \left( \rho |y|^{N+2} e^{-C' \rho|y|^2}
+
|y|^N e^{-C' \rho|y|^2}\right).
\]
Furthermore, by induction, it holds that 
\[
\left|\partial_y^\alpha v_\rho|_{R_1} \right|
\leq 
C \sum_{k+j=|\alpha|} \rho^k y^k e^{-C^{\prime} \rho|y|^2}|y|^{N+1-j}.
\]
Finally, we perform a change of variables $y\mapsto \rho^{-1 / 2} y$ to get that
\[
\int_{R_1}\left|\partial_y^\alpha v_\rho\right|^2 dS_g dt 
\leq 
C \rho^{-(N-|\alpha|+1)-\frac{3}{2}},
\]
from which the estimate \eqref{eq:est_reflection} follows. 

Since all reflection points are distinct, we conclude that the Gaussian beam $v_\rho$ satisfies the following estimates:
\begin{equation}
\label{eq:est_quasimode_inaccessible}
\left\|v_\rho\right\|_{H^k\left(R\right)} =\mathcal{O} \left(\rho^{-\frac{N-k+1}{2}-\frac{3}{4}}\right),
\end{equation}
and
\[
\|v_\rho\|_{L^2((0,T)\times M)}=\mathcal{O}(1), \quad 
\left\|v_\rho\right\|_{L^2((0,T)\times M)}= \mathcal{O}(\rho^{-K}), \quad K=\frac{N+1}{2}+\frac{n}{4}-k-2.
\]
Moreover, we argue similarly as in Subsection \ref{subsec:remainder} to obtain a remainder term $r_\rho$ satisfying the estimate \eqref{eq:est_remainder}.

\section{Proof of Theorem \ref{thm:main_result}}
\label{sec:proof_main_result}

In this section we   prove Theorem \ref{thm:main_result}.  The proof consists of several steps. We first perform a third order linearization and derive an integral identity in Subsection \ref{subsec:linearization}, followed by utilizing the Gaussian beam solutions constructed in Section \ref{sec:Gaussian_beam} to uniquely recover the cubic nonlinearity $V_3$ in Subsection \ref{subsec:recovery_V3}.  Finally, we show, via an inductive argument,  that $V_j$, $j\ge 4$, are uniquely determined by the partial  Dirichlet-to-Neumann map $\Lambda_V\Gamma$.

\subsection{Third order linearization}
\label{subsec:linearization}

In this subsection we perform a third order linearization.  
%We shall utilize similar computations as in \cite[Section 4]{Hintz_Uhlmann_Zhai}   to carry out the linearization process. The main differences in this paper are that we consider the Dirichlet-to-Neumann map, and that only partial boundary data is known.
Let $\varepsilon=\left(\varepsilon_{1}, \varepsilon_{2}, \varepsilon_{3}\right) \in \mathbb{C}^{3}$, and consider the initial boundary value problem \eqref{eq:ibvp_semilinear_wave} with the Dirichlet value $f= \varepsilon_{1} f_{1}+\varepsilon_{2} f_{2}+\varepsilon_{3} f_{3}$. Let $u_\varepsilon=u_\varepsilon(t,x; \varepsilon) \in  E^s((0,T)\times M)$,  $m\geq 5$, be the unique  solution of the initial boundary value problem
\begin{equation}
\label{eq:eq_3rd_linearization}
\begin{cases}
\Box_g u_\varepsilon 
+\sum_{k=3}^{\infty} V_{k}(t, x) \frac{u_\varepsilon^{k}}{k!}=0  & \text { in } (0,T) \times M,
\\
u_\varepsilon = \varepsilon_{1} f_{1}+\varepsilon_{2} f_{2}+\varepsilon_{3}f_3 & \text { on }  \Sigma,
\\
u_\varepsilon(0,\cdot)=0, \quad \p_t u_\varepsilon(0,\cdot)=0 & \text{ in }  M.
\end{cases}
\end{equation}
Let us note that when $\varepsilon=0$, the function $u_\varepsilon(t,x;0)$ satisfies the linear wave equation $\Box_g u_\varepsilon(t,x;0)=0$ with homogeneous initial and boundary conditions. 
Then it follows immediately that $u_\varepsilon(t,x;0) =0$.
Therefore, by differentiating   \eqref{eq:eq_3rd_linearization} with respect to $\varepsilon_{i}$, $i=1,2,3$,  we get that $\p_{\varepsilon_{i}} u_{\varepsilon}|_{\varepsilon=0}=w_i$, where $w_i$, $i=1,2,3$, are solutions of the  initial boundary value problem  for the linear wave equation \eqref{eq:linear_wave_forward} with the  Dirichlet condition $f=f_i$. 

Let us next perform the second order linearization of  \eqref{eq:eq_3rd_linearization}. To this end, we observe that, for any $i,j\in \{1,2\}$,   each term in the sum $ \p_{\varepsilon_{i}} \p_{\varepsilon_{j}}\left(\sum_{k=3}^{\infty} V_{k} (t, x) \frac{u^{k}_\varepsilon}{k!}\right)\big|_{\varepsilon=0}$ contains a positive power of $u_\varepsilon$, which vanishes when $\varepsilon=0$.
Hence,   the function $\mathcal{U}^{(ij)} := \p_{\varepsilon_{i}} \p_{\varepsilon_{j}} u_\varepsilon|_{\varepsilon=0}$ satisfies the initial boundary value problem
\[
\begin{cases}
\Box\mathcal{U}^{(ij)}  = 0  & \text { in }   (0,T) \times M,
\\
\mathcal{U}^{(ij)}=0 & \text { on }   \Sigma,
\\
\mathcal{U}^{(ij)}(0,\cdot)=0, \quad \p_t \mathcal{U}^{(ij)} (0,\cdot)=0  &\text{ in }   M.
\end{cases}
\]

Turning our attention to the third order linearization, we observe that
\[
\p_{\varepsilon_{1}} \p_{\varepsilon_{2}} \p_{\varepsilon_{3}} \left(\sum_{k=4}^{\infty} V_{k} (t, x) \frac{u_\varepsilon^{k}}{k!}\right)\bigg|_{\varepsilon=0}=0,
\] 
as each term in the sum contains a positive power of $u_\varepsilon$, which vanishes when $\varepsilon=0$. For the term $V_3(t,x)\frac{u_\varepsilon^3}{3!}$, it follows from direct computations that
%\[
%\p_{\varepsilon_{1}} \p_{\varepsilon_{2}} \p_{\varepsilon_{3}} \left(V_{3} (t, x) \frac{u_\varepsilon^{3}}{3!}\right) 
%=
%V_{2} (t, x)
%\left[
%\p_{\varepsilon_{1}} u_j \p_{\varepsilon_{2}} u_j
%+u_j \p_{\varepsilon_{1}}\p_{\varepsilon_{2}}u_j
%\right].
%\]
%Using again that $u_j(t,x;0)=0$, we get  
\[
\p_{\varepsilon_{1}} \p_{\varepsilon_{2}}  \p_{\varepsilon_{3}}  \left(V_{3} (t, x) \frac{u_{\varepsilon}^{3}}{3!}\right)\bigg|_{\varepsilon=0} 
= 
V_{3}(t, x)   w_1w_2w_3.
\] 
Thus,  we conclude that the function $\mathcal{U}^{(123)} := \p_{\varepsilon_{1}} \p_{\varepsilon_{2}} \p_{\varepsilon_{3}} u_\varepsilon|_{\varepsilon=0}$ solves the following  problem:
\begin{equation}
\label{eq:eq_3rd_differentiation}
\begin{cases}
\Box\mathcal{U}^{(123)} +V_{3}(t, x)   w_1w_2w_3 = 0 & \text { in }   (0,T)\times M,
\\
\mathcal{U}^{(123)}=0 & \text { on }  \Sigma,
\\
\mathcal{U}^{(123)}(0,\cdot)=0, \quad \p_t \mathcal{U}^{(123)} (0,\cdot)=0 & \text{ in } M.
\end{cases}
\end{equation}
Given  the assumption $\Lambda_{V^{(1)}}^\Gamma (\varepsilon_{1} f_{1}+\varepsilon_{2} f_{2}+\varepsilon_{3} f_{3})= \Lambda_{V^{(2)}}^\Gamma(\varepsilon_{1} f_{1}+\varepsilon_{2} f_{2}+\varepsilon_{3} f_{3})$ for all small $\varepsilon_1, \varepsilon_{2}, \varepsilon_{3}$ and all $f_1,f_2,f_3\in C^{s+1}(\Sigma)$, $s\ge 5$, such that $\supp(f_i)\subset \Gamma$, $i=1,2,3$, we have $\p_\nu u_1 = \p_\nu u_2$ on $\Gamma$, where $u_1$ and $u_2$ are the solutions of the problem \eqref{eq:eq_3rd_linearization} with coefficients $V^{(1)}$ and $V^{(2)}$, respectively. As a consequence, we obtain  $\p_\nu \mathcal{U}^{(1,123)} = \p_\nu \mathcal{U}^{(2,123)}$ on $\Gamma$, where $\p_\nu \mathcal{U}^{(j,123)}=\p_{\varepsilon_{1}} \p_{\varepsilon_{2}} \p_{\varepsilon_{3}} (\p_\nu u_j)|_{\varepsilon=0}$ for $j=1,2$.

Let us next multiply the first equation in \eqref{eq:eq_3rd_differentiation} by the function $w_0$ satisfying  the backward linear wave equation \eqref{eq:linear_wave_backward}
and integrate by parts. By Green's formula, we get the identity
\begin{equation}
\label{eq:integral_identity}
\begin{aligned}
&\int_0^T \int_M (V_{3}^{(1)}(t, x)-V_{3}^{(2)}(t, x))w_0w_1w_2w_3 dV_gdt
%&=
%-\int_0^T\int_M (\Box\mathcal{U}^{(1,123)}-\Box\mathcal{U}^{(2,123)})w_0 dV_gdt
%\\&=
%-\int_0^T\int_M (\partial_t\mathcal{U}^{(1,123)}-\partial_t\mathcal{U}^{(2,123)})w_0 dV_gdt
%+
%\int_{\Sigma \setminus \Gamma} (\partial_\nu \mathcal{U}^{(1,123)}-\partial_\nu \mathcal{U}^{(2,123)})w_0 dS_gdt
\\
&=
\int_{\Sigma \setminus \Gamma} (\partial_\nu \mathcal{U}^{(1,123)}-\partial_\nu \mathcal{U}^{(2,123)})w_0 dS_gdt.
\end{aligned}
\end{equation}

\subsection{Unique recovery of $V_3$}
\label{subsec:recovery_V3}

Our goal in this subsection is to determine the cubic nonlinearity $V_3$. To simplify the notation, in what follows we shall write $V_3(t,x):= V_{3}^{(1)}(t, x)-V_{3}^{(2)}(t, x)$. 

For any $p\in \M=(0,T)\times M$, we define the set of light-like vectors at $p$ as
\[
L_p \M = \{\zeta \in T_p \M \setminus \{0\}: g(\zeta, \zeta)=0\},
\] 
and denote by $L^\ast_p \M$ the corresponding set of light-like covectors at $p$. Moreover,  the notations $L^+_p \M$ and $L^-_p \M$ ($L^{\ast, +}_p \M$ and $L^{\ast, -}_p \M$)  stand for the sets of future and past light-like vectors (covectors), respectively. Furthermore, for any $p\in \M$ and $\theta \in L_p^{\ast, +}\M$, we define
\[
s^+(p, \theta) = \inf\{\tau>0: \gamma(\tau)\in \p \M\}
\quad \text{and} \quad 
s^-(p, \theta) = \sup\{\tau>0: \gamma(\tau)\in \p \M\}.
\]

Let $\tilde p \in \mathbb{U}$, and let $\M_1 = (0,T) \times M_1$, where $M \subset M_1^{\mathrm{int}}$.  
%$\xi^{(j)}\in L^{\ast, +}_{\tilde p} \M$ be the cotangent vector to $\gamma^{(j)}$ at $\tilde p$.  
Then there exist covectors $\theta_0, \theta_1\in L_{\tilde p}^{\ast, +}\M$ and $\theta_2, \theta_3 \in L_{\tilde p}^{\ast, +}\M_1$, as well as constants $\kappa_j$, $j=0,1,2,3$, such that
\[
\kappa_0 \theta_0+\kappa_1 \theta_1 +\kappa_2 \theta_2 + \kappa_3 \theta_3=0,
\]
see \cite[Section 3.5]{Hintz_Uhlmann_Zhai_4th_order} for the detailed choices of $\kappa_j$ and $\theta_j$.

We set
\[
x_0 = \gamma_{\tilde p, \theta_0^\sharp} (s^+(\tilde p, \theta_0)) \in \Sigma, 
\quad 
x_1= \gamma_{\tilde p, \theta_1^\sharp} (s^-(\tilde p, \theta_0)) \in \Sigma,
\]
and denote 
\[
\xi_0 = \dot \gamma_{\tilde p, \theta_0^\sharp} (s^+(\tilde p, \theta_0)) \in L_{x_0}^- \M_1, 
\quad 
\xi_1= \dot \gamma_{\tilde p, \theta_1^\sharp} (s^-(\tilde p, \theta_0)) \in L_{x_0}^+ \M_1.
\]
Also, for $j=2,3$, let 
\[
x_j = \gamma_{\tilde p, \theta_j} (s^-(\tilde p, \theta_j)) \in \Sigma, \quad 
\xi_j = \dot \gamma_{\tilde p, \theta_j^\sharp} (s^-(\tilde p, \theta_0)) \in L_{x_j}^+ \M_1.
\]
In above, the notation $\sharp$ stands for the musical isomorphism mapping covectors to vectors.

Let us  denote $\gamma^{(j)}:= \gamma_{x_j,\xi_j}$, $j=0,1,2,3$, to be broken null geodesics such that $\gamma^{(j)}(0)=x_j$, $\dot \gamma^{(j)} (0)=\xi_j$, and $\gamma^{(j)}(s_j) = \tilde p$ for some $s_j>0$. By the null-convexity assumption of $\Sigma$, $\gamma^{(j)}$ is transversal to $\Sigma$.
%
%,  and $\gamma^{(1)}$ intersects only at $\tilde p$. 

From the constructions in Section \ref{sec:Gaussian_beam}, we obtain Gaussian beam solutions $w_j$, $j=1,2,3$, to the forward problem \eqref{eq:linear_wave_forward}, which are of the following form before the first reflection:
\[
w_j= e^{i \rho \kappa_j \varphi^{(j)}}a_{\kappa_j,\rho}^{(j)} + r_{\rho,j},
\]
Similarly, there exists a solution $w_0$ of the backward wave equation \eqref{eq:linear_wave_backward} given by 
\[
w_0= e^{i \rho \kappa_0 \varphi^{(0)}}a_{\kappa_0,\rho}^{(j)} + r_{\rho,0}.
\]
Here each remainder term $r_{\rho,j}$, $j=0,1,2,3$, satisfies the estimate \eqref{eq:est_remainder}. 

We now substitute the   solutions above into the integral identity \eqref{eq:integral_identity} and multiply both sides by $\rho^{\frac{n+1}{2}}$. By utilizing the estimate \eqref{eq:est_remainder},  the left-hand side of \eqref{eq:integral_identity} becomes
\[
\mathcal{I} := \rho^{\frac{n+1}{2}} \int_{0}^T \int_M V_3(t,x) e^{i\rho S} a_{\kappa_0, \rho}^{(0)} a_{\kappa_1, \rho}^{(1)} a_{\kappa_2, \rho}^{(2)} a_{\kappa_3, \rho}^{(3)}dV_gdt + \mathcal{O}(\rho^{-2}),
\]
where $S:=\kappa_0 \varphi^{(0)}+\kappa_1 \varphi^{(1)}+\kappa_2 \varphi^{(2)}+\kappa_3 \varphi^{(3)}$.

We next state a key lemma describing  some  properties of the function $S$, which will be essential for the subsequent analysis. This result was originally established in \cite[Lemma 5]{Feizmohammadi_Oksanen_semilinear_Euclidean}.

\begin{lemma}
\label{l.w10291} 
The function $S$ is well-defined in a small neighborhood of the point $\tilde p \in \mathbb{U}$ and  satisfies the following properties:
\begin{itemize}
\item[(i)] $S(\tilde p)=0$;
\item[(ii)] $\nabla^g S(\tilde p)=0$;
\item[(iii)] $\Im S(q) \geq  c d(q, p)^2$ for any point $q$ in a neighborhood of $\tilde p$. Here $c>0$ is a constant.
\end{itemize}
\end{lemma}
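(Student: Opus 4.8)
The plan is to establish the three properties of $S$ by tracking how each phase function $\varphi^{(j)}$ behaves along and near the broken null geodesics $\gamma^{(j)}$, and then combining them using the linear relation $\kappa_0\theta_0+\kappa_1\theta_1+\kappa_2\theta_2+\kappa_3\theta_3=0$ that holds at $\tilde p$. First I would recall from the Gaussian beam construction in Section \ref{sec:Gaussian_beam} that each phase $\varphi^{(j)}$ is built in Fermi coordinates along $\gamma^{(j)}$ so that, in the notation of \eqref{eq:form_phase_amplitude}, $\varphi^{(j)}=\varphi_0^{(j)}+\varphi_1^{(j)}+\varphi_2^{(j)}+\cdots$ with $\varphi_0^{(j)}=0$, $\varphi_1^{(j)}=z_1$ (the coordinate transversal–null direction in the Fermi chart), and $\varphi_2^{(j)}$ a quadratic form with positive-definite imaginary part coming from the Riccati solution $H(s)$ of Lemma \ref{lem:solution_Riccati}. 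The key normalization is that each chart is centered so that $\gamma^{(j)}$ passes through $\tilde p$; the construction is arranged (as in \cite[Section 3.5]{Hintz_Uhlmann_Zhai_4th_order} and \cite[Lemma 5]{Feizmohammadi_Oksanen_semilinear_Euclidean}) so that $\varphi^{(j)}(\tilde p)=0$ for every $j$ and $d\varphi^{(j)}(\tilde p)=\theta_j$ (or $\kappa_j\theta_j$ after the scaling absorbed into $S$), which is precisely the covector along which the beam concentrates.

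Given this, property (i) is immediate: $S(\tilde p)=\sum_j \kappa_j\varphi^{(j)}(\tilde p)=0$. For property (ii), I would compute $dS(\tilde p)=\sum_j \kappa_j\, d\varphi^{(j)}(\tilde p)=\sum_j \kappa_j\theta_j=0$ by the chosen linear relation, and then note $\nabla^g S(\tilde p)$ is the metric dual of $dS(\tilde p)$, hence also zero. The one point that needs care here is the matching of conventions at the reflection points: since the $w_j$ are Gaussian beams \emph{with reflections}, each $\varphi^{(j)}$ is only piecewise defined, but near $\tilde p$ (an interior point of $\mathbb{U}$, away from $\Sigma$) each $\gamma^{(j)}$ is an honest null geodesic through $\tilde p$ with no reflection in a neighborhood, so $\varphi^{(j)}$ is smooth there and the above identities make sense; I would state this localization explicitly.

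The substantive part is property (iii): the lower bound $\Im S(q)\ge c\, d(q,\tilde p)^2$ near $\tilde p$. Since $\Im\varphi^{(j)}\ge 0$ everywhere by \eqref{eq:properties_phi}, we certainly have $\Im S\ge 0$, but the Hessians $\Im(\mathrm{Hess}\,\varphi^{(j)})(\tilde p)$ are each only positive semi-definite — positive definite on the $n-1$ transversal spatial directions of the Fermi chart for $\gamma^{(j)}$ but vanishing along $\dot\gamma^{(j)}$. The claim is that summing over the four beams fills out all directions. The mechanism, which I would lay out following \cite{Feizmohammadi_Oksanen_semilinear_Euclidean}, is: the null directions $\dot\gamma^{(0)},\dot\gamma^{(1)},\dot\gamma^{(2)},\dot\gamma^{(3)}$ at $\tilde p$ (equivalently the covectors $\theta_0,\theta_1,\theta_2,\theta_3$) are chosen to be pairwise non-proportional and to span $T_{\tilde p}\M$; hence for any nonzero $v\in T_{\tilde p}\M$ there is at least one $j$ with $v$ not parallel to $\dot\gamma^{(j)}$, so $\Im(\mathrm{Hess}\,\varphi^{(j)})(\tilde p)(v,v)>0$, and a compactness argument on the unit sphere in $T_{\tilde p}\M$ produces a uniform $c>0$ with $\sum_j\kappa_j\Im(\mathrm{Hess}\,\varphi^{(j)})(\tilde p)(v,v)\ge c|v|^2$ — here one also uses $\kappa_j>0$, or groups terms appropriately if some $\kappa_j$ have unfavorable signs, as handled in the reference. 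A second-order Taylor expansion of $\Im S$ at $\tilde p$, using (i) and (ii) to kill the zeroth and first order terms, then gives $\Im S(q)\ge \tfrac{c}{2}d(q,\tilde p)^2 + o(d(q,\tilde p)^2)$, which yields (iii) after shrinking the neighborhood.

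The main obstacle I anticipate is verifying the spanning/transversality property of $\{\theta_0,\theta_1,\theta_2,\theta_3\}$ at $\tilde p$ together with the sign information on the $\kappa_j$ — i.e., confirming that the specific geometric configuration chosen in \cite[Section 3.5]{Hintz_Uhlmann_Zhai_4th_order} really does make $\sum_j \kappa_j\Im(\mathrm{Hess}\,\varphi^{(j)})$ coercive rather than merely nonnegative. If some $\kappa_j<0$, one cannot simply add the semi-definite forms; instead one must argue that wherever a negative-coefficient term could reduce positivity, another beam's strictly positive transversal Hessian dominates, which requires knowing the mutual positions of the four null directions precisely. Everything else — the Fermi-coordinate normal form, the Riccati positivity from Lemma \ref{lem:solution_Riccati}, and the Taylor expansion — is routine once that configuration lemma is in hand, so I would isolate that linear-algebra-plus-geometry fact as the crux and cite the reference for its verification while recording the statement we need.
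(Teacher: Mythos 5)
The paper itself gives no proof of this lemma---it simply invokes \cite[Lemma 5]{Feizmohammadi_Oksanen_semilinear_Euclidean}---and your sketch reproduces the standard argument behind that citation: the phases vanish on the null geodesics through $\tilde p$, giving (i); $dS(\tilde p)=\sum_j\kappa_j\,d\varphi^{(j)}(\tilde p)=\sum_j\kappa_j\theta_j=0$ gives (ii); and the positivity of $\Im\varphi^{(j)}$ transversal to $\gamma^{(j)}$ combined with the distinctness of the four null directions at $\tilde p$ gives (iii). The crux you flag is resolved exactly as you suspect: the construction arranges each effective phase $\kappa_j\varphi^{(j)}$ to have nonnegative imaginary part bounded below by the squared distance to $\gamma^{(j)}$ (taking conjugate beams when $\kappa_j<0$), after which only pairwise non-proportionality of the $\dot\gamma^{(j)}$---not a spanning condition---is needed for the summed Hessians to be coercive.
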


Thanks to   Lemma \ref{l.w10291},  an application of the stationary phase \cite[Theorem 7.7.5]{hormander} gives us 
\begin{equation}
\label{eq:LHS_int}
\mathcal{I}=cV_{3}(\tilde p) a_{\kappa_0, \rho}^{(0)}(\tilde p) a_{\kappa_1, \rho}^{(1)}(\tilde p) a_{\kappa_2, \rho}^{(2)}(\tilde p) a_{\kappa_3, \rho}^{(3)}(\tilde p) + \mathcal{O}(\rho^{-2})
\end{equation}
for some constant $c\ne 0$. 

We next investigate the behavior of the right-hand side of the integral identity \eqref{eq:integral_identity} as $\rho \to \infty$. Note that differentiating $u_\varepsilon=u(t,x;\varepsilon) \in E^s((0,T)\times M)$ with respect to $\varepsilon$ does not affect its regularity. Thus,  it holds that
\[
\p_\nu \mathcal{U}^{(j,123)}\big|_{\Sigma} \in   \bigcap_{k=0}^s C^k\left((0,T) ; H^{s-k-\frac{3}{2}}(\p M)\right), \quad j=1,2.
\]
In particular, $\p_\nu \mathcal{U}^{(j,123)}|_{\Sigma} \in C((0,T);H^{s-\frac{3}{2}}(\p M))$.
Since $s\ge 5$, the Sobolev embedding  $H^{s-\frac{3}{2}}(\p M) \hookrightarrow L^2(\p M)$ implies that  $\p_\nu \mathcal{U}^{(j,123)}(t,\cdot)|_{\Sigma}\in L^2(\p M)$ for each $t\in (0,T)$. Furthermore, the map $t\mapsto \|\p_\nu \mathcal{U}^{(j,123)}(t,\cdot)\|_{L^2(\p M)}$ is continuous and thus bounded. Therefore, we conclude that $\p_\nu \mathcal{U}^{(j,123)}|_{\Sigma}\in L^2(\Sigma)$.
By the Cauchy-Schwarz inequality and the estimate \eqref{eq:est_quasimode_inaccessible}, we obtain the estimate
\begin{equation}
\label{eq:RHS_int}
\left|\int_{\Sigma \setminus \Gamma} (\partial_\nu \mathcal{U}^{(1,123)}-\partial_\nu \mathcal{U}^{(2,123)})w_0 dS_gdt\right| = \mathcal{O}(\rho^{-\frac{N-k+1}{2}-\frac{3}{4}}).
\end{equation}

Taking the limit $\rho \to \infty$ in both \eqref{eq:LHS_int} and \eqref{eq:RHS_int}, we  arrive at the equation
\[
V_{3}(\tilde p) a_{\kappa_0, \rho}^{(0)}(\tilde p) a_{\kappa_1, \rho}^{(1)}(\tilde p) a_{\kappa_2, \rho}^{(2)}(\tilde p) a_{\kappa_3, \rho}^{(3)}(\tilde p)=0.
\]
Let us recall that the null geodesics $\gamma^{(j)}$, $j=0,1,2,3$ intersect only at $\tilde p$. Thus, the cut points do not exist, which implies that the product $a_{\kappa_0, \rho}^{(0)} a_{\kappa_1, \rho}^{(1)}  a_{\kappa_2, \rho}^{(2)} a_{\kappa_3, \rho}^{(3)}$ is supported near $\tilde p$. 
Therefore, we have $V_3(\tilde p) = 0$. Since $\tilde p$ is an arbitrary point in  $\mathbb{U}$, we conclude that $V_3=0$ in $\mathbb{U}$. This completes the unique recovery of $V_3$.

\subsection{Recovery of $V_m$, $m \geq 4$}
\label{subsec:recovery_higher_order}

In this subsection we aim to recover the higher-order coefficients $V_m$, $m \geq 4$. 
The proof includes performing the higher-order linearization of the partial Dirichlet-to-Neumann map $\Lambda_V^\Gamma$ and utilizing an induction argument.

Assume that the coefficients $V_3, V_4, \ldots, V_{m-1}$ have been uniquely recovered   in the set $\mathbb{U} \subset (0,T)\times M$ for some $m \geq 4$. We next show that   $\Lambda_V^\Gamma$ determines $V_m$ uniquely in $\mathbb{U}$.

We first perform the higher order linearization. Let $\varepsilon=(\varepsilon_{1},\dots,\varepsilon_{m})\in \mathbb{C}^m$, and let $u_\varepsilon=u_\varepsilon(t,x; \varepsilon) \in  E^s((0,T)\times M)$,  $s \geq 5$, be the unique  solution of the initial boundary value problem
\begin{equation}
\label{eq:higher_order}
\begin{cases}
\Box u_\varepsilon 
+\sum_{k=3}^{\infty} V_{k}(t, x) \frac{u_\varepsilon^{k}}{k!}=0  & \text { in } (0,T) \times M,
\\
u_\varepsilon  = \sum_{k=1}^m \varepsilon_k f_k & \text { on }  \Sigma,
\\
u_\varepsilon(0,\cdot)=0, \quad \p_t u_\varepsilon(0,\cdot)=0 & \text{ in }  M.
\end{cases}
\end{equation}
%We perform the $m$-th order linearization by applying the operator $\partial \varepsilon_1 \cdots \partial \varepsilon_m|$ to   \eqref{eq:ivbp_higher_order} and evaluating it at $\varepsilon=0$. 
Let $\mathcal{U}^{(1\cdots m)} := \partial_{ \varepsilon_1} \cdots \partial_{\varepsilon_m} u_\varepsilon|_{\varepsilon=0}$. 
By carrying out similar computations as in Subsection \ref{subsec:linearization}, we see that $\mathcal{U}^{(1\cdots m)}$ satisfies the  linear equation
\begin{equation}
\label{eq:Nth_linearization}
\begin{cases}
\square_g \mathcal{U}^{(1\cdots m)} + R_m(w_1, \ldots, w_m; \mathcal{U}^{(I)}_{|I|<m}) + V_m \prod_{i=1}^m w_i = 0 & \text{in } (0,T) \times M, 
\\
\mathcal{U}^{(1\cdots m)} = 0 & \text{on } \Sigma, 
\\
\mathcal{U}^{(1\cdots m)}(0,\cdot) = \partial_t \mathcal{U}^{(1\cdots m)}(0,\cdot) = 0 & \text{in } M,
\end{cases}
\end{equation}
where $w_i = \p_{\varepsilon_i} u_\varepsilon|_{\varepsilon=0}$ is the solution to the linear wave equation \eqref{eq:linear_wave_forward} with boundary data $f_k$, and $R_m$ is a polynomial   involving the coefficients $V_3, \ldots, V_{m-1}$ and the lower-order derivatives $\mathcal{U}^{(I)}$ of $u_{\varepsilon}$ with multi-index $I$ such that $|I| < m$.

Note that the term $R_m$ depends only on coefficients $V_k$ with $k < m$ and the lower-order derivatives $\mathcal{U}^{(I)}$ with $|I| < N$, which are already known from the induction hypothesis. This follows from the structure of the nonlinearity given in the expansion \eqref{eq:expansion_V}, as well as the fact that when applying the product rule to the function $\partial_{ \varepsilon_1} \cdots \partial_{ \varepsilon_m} V(x,u_\varepsilon)$, any term involving $V_m$ must contain exactly one factor of $V_m$ and $m$ factors of first derivatives $\partial_{\varepsilon_k} u_\varepsilon$, while terms involving $V_k$ with $k < m$ contain more than $m$ total derivatives.  

Indeed, we first observe that $\partial_{\varepsilon_1} \cdots \partial_{\varepsilon_m} (\sum_{k=m+1}^{\infty} V_k(t,x)\frac{u_\varepsilon^k}{k!})$ is a sum of terms each containing positive powers of $u_\varepsilon$, which   vanish when $\varepsilon=0$. 
Moreover, the only term in $\partial_{\varepsilon_1} \cdots \partial_{\varepsilon_m} (V_m(t,x)\frac{u_\varepsilon^m}{m!})$ that does not contain a positive power of $u_\varepsilon$ is $V_m \prod_{i=1}^m w_i$. 
Finally, the expression $\partial_{\varepsilon_1} \cdots \partial_{\varepsilon_m} (\sum_{k=3}^{m-1} V_k(t,x)\frac{u_\varepsilon^k}{k!})|_{\varepsilon=0}$ contains only derivatives of $u_\varepsilon$ of the form $\partial_{\varepsilon_{l_1}\ldots\varepsilon_{l_k}}^{k}u_\varepsilon|_{\varepsilon=0}$ with $k=1,\dots, m-1$ and  $\varepsilon_{l_1}, \dots, \varepsilon_{l_k} \in \{\varepsilon_1, \dots, \varepsilon_m\}$. By the induction hypothesis and the unique solvability of the Dirichlet problem for the wave equation, we have that $\partial_{\varepsilon_{l_1}\ldots\varepsilon_{l_k}}^{k}u_1|_{\varepsilon=0} = \partial_{\varepsilon_{l_1}\ldots\varepsilon_{l_k}}^{k}u_2|_{\varepsilon=0}$ for $k=1,\ldots,m-1$. 
Hence, the term 
\[
R_m(t,x) := \partial_{\varepsilon_1} \cdots \partial_{\varepsilon_m} \left(\sum_{k=3}^{m-1} V_k(t,x)\frac{u_\varepsilon^k}{k!}\right)\bigg|_{\varepsilon=0}
\] 
is independent of $u_j$, $j=1,2$. Here $u_j$ satisfies the problem \eqref{eq:higher_order} with the coefficient $V^{(j)}$.

Consider two functions $V^{(1)}$ and $V^{(2)}$ such that $\Lambda_{V^{(1)}}^\Gamma = \Lambda_{V^{(2)}}^\Gamma$. By the induction hypothesis, we have $V_k^{(1)} = V_k^{(2)}$ for $k = 3, \dots, m-1$, and consequently $R_m^{(1)} = R_m^{(2)}$. Let $w_0$ be a solution to the backward wave equation \eqref{eq:linear_wave_backward}. Multiplying first equation in \eqref{eq:Nth_linearization} by $w_0$ and integrating by parts, we argue similarly as in the case $m=3$ to deduce that
\[
\int_0^T \int_M \left(V_m^{(1)}(t,x) - V_m^{(2)}(t,x)\right) w_0 \prod_{k=1}^m w_k  dV_g dt = \int_{\Sigma \setminus \Gamma} \left(\partial_\nu \mathcal{U}^{(1,1\cdots m)} - \partial_\nu \mathcal{U}^{(2,1\cdots m)}\right) w_0  dS_g dt.
%\begin{split}
%& \\
%&= \int_0^T \int_M \left(\square_g \mathcal{U}^{(1,1\cdots m)} - \square_g \mathcal{U}^{(2,1\cdots m)}\right) w_0  dV_g dt \\
%&= \int_0^T \int_{\p M} \left(\partial_\nu \mathcal{U}^{(1,1\cdots m)} - \partial_\nu \mathcal{U}^{(2,1\cdots m)}\right) w_0  dS_g dt \\
%&= \int_{\Gamma} \left(\partial_\nu \mathcal{U}^{(1,1\cdots m)} - \partial_\nu \mathcal{U}^{(2,1\cdots m)}\right) w_0  dS_g dt 
%+ 
%\int_{\Sigma \setminus \Gamma} \left(\partial_\nu \mathcal{U}^{(1,1\cdots m)} - \partial_\nu \mathcal{U}^{(2,1\cdots m)}\right) w_0  dS_g dt.
%\end{split}
\]

Let $\tilde p \in \mathbb{U}$, and let $\gamma^{(j)}$, $j=0,1,2,3$, be null geodesics as in Subsection \ref{subsec:recovery_V3}. We choose the Gaussian beam solutions $w_0, w_1, \ldots, w_m$, which are given by the following expression before the first reflection:
\[
w_j= e^{i \rho \kappa_j \varphi^{(j)}}a_{\kappa_j,\rho}^{(j)} + r_{\rho,j}, \quad j=0,1,2,
\]
and
\[
w_j= e^{i \rho \frac{\kappa_3}{N-2} \varphi^{(j)}}a_{ \frac{\kappa_3}{N-2},\rho}^{(j)} + r_{\rho,j}, \quad j=3, \dots, N,
\]
where each remainder term $r_{\rho,j}$ satisfies the estimate \eqref{eq:est_remainder}. 
By the same arguments as in Subsection \ref{subsec:recovery_V3}, we get that
\[
V_m(t,x) a_{\kappa_0, \rho}^{(0)}(\tilde p) a_{\kappa_1, \rho}^{(1)}(\tilde p) a_{\kappa_2, \rho}^{(2)}(\tilde p) \left(a_{\kappa_3, \rho}^{(3)}(\tilde p)\right)^{m-2}=0.
\]
Let us recall   that the function $a_{\kappa_0, \rho}^{(0)}(\tilde p) a_{\kappa_1, \rho}^{(1)}(\tilde p) a_{\kappa_2, \rho}^{(2)}(\tilde p) (a_{\kappa_3, \rho}^{(3)}(\tilde p))^{m-2}$ is supported near $\tilde p$. Thus, it follows that $V_m(\tilde p)=0$. Since $\tilde p$ is arbitrary, it holds that $V_m(t,x)=0$ in $\mathbb{U}$.

Therefore, by induction, we have $V_m^{(1)} = V_m^{(2)}$ for all $m \geq 3$. Furthermore, since the power series for $V(t,x,z)$ converges in the $C^\alpha$-topology, we conclude that $V^{(1)}(t,x,z) = V^{(2)}(t,x,z)$ for all $(t,x)\in (0,T)\times M$ and $z \in \mathbb{C}$. This completes the proof of Theorem \ref{thm:main_result}.

\section*{Acknowledgments}

The authors would like to express their gratitude to Ali Feizmohammadi, Katya Krupchyk, Gunther Uhlmann, and Yang Zhang for their helpful discussions. B.L. was partially supported by the  Simons Foundation Travel Support for Mathematicians (MPS-TSM-00013766). W.W. was partially supported by the  Simons Foundation Travel Support for Mathematicians (No. 0007730).

\bibliographystyle{abbrv}
\bibliography{bib_nonlinear_hyperbolic}

\end{document}